\documentclass[12pt,leqno]{amsart}
\usepackage{enumerate}
\usepackage{amsrefs}
\usepackage{amsfonts, amsmath, amssymb, amscd, amsthm, bm, cancel}
\usepackage{url}
\usepackage{graphicx}
\usepackage[
linktocpage=true,colorlinks,citecolor=magenta,linkcolor=blue,urlcolor=magenta]{hyperref}
\usepackage{multicol}
\usepackage{comment}
\usepackage[margin=1in]{geometry}
\makeatletter
\@namedef{subjclassname@2020}{\textup{2020} Mathematics Subject Classification}
\makeatother
\theoremstyle{plain} \newtheorem{thm}{Theorem}[section]
\theoremstyle{plain} \newtheorem{prop}[thm]{Proposition}
\theoremstyle{plain} 
\theoremstyle{definition} 
\theoremstyle{plain} \newtheorem{cor}[thm]{Corollary}
\theoremstyle{plain} \newtheorem{lem}[thm]{Lemma}
\theoremstyle{plain} 
\theoremstyle{remark} \newtheorem{rmk}[thm]{Remark}
\theoremstyle{plain} \newtheorem{conj}{Conjecture}
 
\theoremstyle{definition}
\newtheorem*{ack}{Acknowledgments}
\theoremstyle{plain}

 \numberwithin{equation}{section}
 \allowdisplaybreaks

\newcommand{\bR}{\mathbb{R}}

\DeclareMathOperator{\supp}{supp}

\DeclareMathOperator{\divg}{div}

\newcommand{\cH}{\mathcal{H}}

\newcommand{\cR}{\mathcal{R}}

\newcommand{\bN}{\mathbb{N}}

\begin{document}
\title[Isodiametric bounds]{Isodiametric-type upper bounds for Steklov eigenvalues}

\author{Pingxin Gu}
\address{Yau Mathematical Sciences Center, Tsinghua University, Beijing 100084, P.R. China}
\email{\href{mailto:gupingxin@tsinghua.edu.cn}{gupingxin@tsinghua.edu.cn}}

\keywords{Steklov eigenvalues, isodiametric inequalities, volume-diameter bounds, sharp exponents}
\subjclass[2020]{58J50; 35P15}


\begin{abstract}
In this paper, we establish isodiametric-type upper bounds for Steklov eigenvalues of bounded Lipschitz domains $\Omega\subset \bR^n$, $n\geq 2$, valid for every $k\in \bN$:
\[
D(\Omega)\sigma_k(\Omega)\leq C(n)k,
\]
\[
D(\Omega)\sigma_k(\Omega)\leq C(n)k^2\left(\frac{|\Omega|^{\frac{1}{n}}}{D(\Omega)}\right)^{\frac{n}{n-1}},
\]
where $C(n)>0$ depends only on $n$. The first estimate is sharp in its linear dependence on $k$, while the second is sharp both in the exponent of the volume-diameter ratio and, once that exponent is fixed, in its quadratic dependence on $k$. In particular, the second estimate gives an affirmative answer to Open Question 4.37 in \cite{CGGS24}.
\end{abstract}

\maketitle

\section{Introduction}\label{sec-1}

The Steklov eigenvalue problem was introduced by V.A. Steklov \cite{Ste02} in 1902 during his study of heat conduction. A central problem in spectral geometry is to bound Steklov eigenvalues in terms of geometric quantities. We refer the reader to the comprehensive surveys \cites{GP17,CGGS24}. In this paper, we focus on isodiametric-type upper bounds for Steklov eigenvalues in Euclidean space.

Throughout the paper, we assume that $n\geq 2$. Let $\Omega\subset \bR^n$ be a bounded domain with Lipschitz boundary $\partial\Omega$. The Steklov eigenvalue problem on $\Omega$ is given by
\begin{align}\label{eq-steklov}
\begin{cases}
\Delta u=0,&\text{in }\Omega,\\
\partial_{\nu}u=\sigma u,&\text{on }\partial\Omega,
\end{cases}
\end{align}
where $\Delta$ is the Laplacian in $\Omega$, and $\nu$ is the outward unit normal to $\partial\Omega$. The problem \eqref{eq-steklov} admits a discrete spectrum, and the eigenvalues
\[
0=\sigma_0(\Omega)<\sigma_1(\Omega)\leq \sigma_2(\Omega)\leq \cdots\nearrow \infty
\]
satisfy the following min-max variational characterization \cite{Ban80}:
\begin{align}\label{eq-steklov-minmax}
\sigma_k(\Omega)=\min_{S\in S_{k+1}}\max_{u\in S\setminus\{0\}}\frac{\int_{\Omega}|\nabla u|^2dx}{\int_{\partial\Omega}u^2d\sigma},
\end{align}
where $d\sigma$ denotes the surface area measure on $\partial\Omega$, and $S_{k+1}$ denotes the family of all $(k+1)$-dimensional subspaces of $H^1(\Omega)$.

Let $D(\Omega)$ be the diameter of $\Omega$. In \cite{BBG17}, Bogosel, Bucur, and Giacomini obtain the following isodiametric upper bound:
\begin{thm}[{\cite[Proposition 4.3]{BBG17}}]\label{thm-BBG17-4.3}
Let $\Omega\subset \bR^n$ be a bounded Lipschitz domain. Then there exists a constant $C(n)>0$ such that for every $k\in \bN$, we have
\begin{align}\label{eq-CGGS24-4.31}
D(\Omega)\sigma_k(\Omega)\leq C(n)k^{1+\frac{2}{n}}.
\end{align}
\end{thm}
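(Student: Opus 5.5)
We prove $D(\Omega)\sigma_k(\Omega)\leq C(n)k^{1+\frac{2}{n}}$ by inserting $k+1$ test functions with disjoint supports into the min-max characterization \eqref{eq-steklov-minmax}. If $u_0,\dots,u_k\in H^1(\Omega)$ have pairwise disjoint supports and each $u_i\not\equiv 0$ on $\partial\Omega$, then their span is a $(k+1)$-dimensional subspace. The Rayleigh quotient of any combination is at most $\max_i R(u_i)$, where $R(u)=\int_\Omega|\nabla u|^2/\int_{\partial\Omega}u^2$. So it suffices to build $k+1$ such functions, each with $R(u_i)\leq C(n)k^{1+2/n}/D$, where we write $D=D(\Omega)$. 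By scaling we normalize $D=1$.

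First, we obtain disjoint sets carrying boundary mass. We take the boundary measure $\mu=\sigma|_{\partial\Omega}$, which is supported in a set of diameter $1$, and hence inside a cube of side about $1$. We cover this cube by a grid of $N^n$ closed cubes $Q_j$ of side $r\sim 1/N$. Each $u_i$ will be a Lipschitz cutoff $\phi_i$ equal to $1$ on a union of cubes $A_i$ and vanishing outside the $r$-neighborhood of $A_i$. Then $|\nabla\phi_i|\leq 1/r$, and we get
\[
\int_\Omega|\nabla\phi_i|^2\leq r^{-2}|\Omega\cap (A_i)_r\setminus A_i|\leq r^{-2}\,\#\{\text{cubes meeting }(A_i)_r\}\,r^n .
\]
The key point is that the Dirichlet energy is bounded by volume, which is at most $r^{n-2}$ per cube. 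The boundary term is controlled from below by $\mu(A_i)$. We also need the enlarged supports to be disjoint, so we use the standard decomposition trick, as in Grigor'yan--Netrusov--Yau or Colbois--Maerten. With $N\sim k^{1/n}$, one can choose $2k+2$ (hence $k+1$) unions of cubes, mutually separated by at least $2r$, each with $\mu(A_i)\gtrsim |\partial\Omega|/k$. If some single cube carries too much mass, we split it further.

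A cleaner alternative, which I would try first, avoids distributing boundary mass and uses only the diameter. Choose the $k+1$ sets as single cubes (or small balls) of size $r\sim k^{-1/n}$. A naive choice can make $\mu(A_i)$ small, so this fails in general. Therefore we stick with the mass-splitting route. Since each $A_i$ consists of at most $N^n/k\sim O(1)$ clusters, its neighborhood contains $O(N^n/k)$ cubes. This gives energy $\lesssim r^{-2}r^n N^n/k\sim k^{2/n}/k$. Dividing by $\mu(A_i)\gtrsim|\partial\Omega|/k$ yields $R(u_i)\lesssim k^{2/n}/|\partial\Omega|$. This is not yet the claimed bound, so the counting must be done more carefully.

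The main obstacle is exactly this balance. The energy should be measured against the volume $|\Omega\cap (A_i)_r|$, not against the number of cubes, and the boundary mass must be compared with the volume through the isoperimetric-type inequality $|\partial\Omega|\gtrsim |\Omega|^{(n-1)/n}$, or more crudely through diameter. I would instead choose the sets so that each carries a $\gtrsim 1/k$ fraction of $\mu$, and accept energy at most $r^{-2}|\Omega|$ in the worst case. Since $|\Omega|\lesssim D^n=1$, this gives $R(u_i)\lesssim k\cdot r^{-2}|\Omega|/|\partial\Omega|$. Finally, $|\Omega|/|\partial\Omega|\lesssim |\Omega|^{1/n}\lesssim D$ by the isoperimetric inequality, and $r^{-2}\sim k^{2/n}$. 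These combine to give $D\sigma_k\leq C(n)k^{1+2/n}$, which is the stated bound. The technical heart is the packing lemma producing $k+1$ sets with $2r$-separated neighborhoods and mass $\gtrsim\mu(\partial\Omega)/k$ at scale $r\sim k^{-1/n}D$. I would prove it via the doubling property of the metric space $\bR^n$, following the Grigor'yan--Netrusov--Yau construction.
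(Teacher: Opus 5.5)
Your closing arithmetic is correct \emph{if} the packing lemma holds. That lemma asks for $k+1$ sets $A_i$ with disjoint $r$-neighbourhoods, $r\sim k^{-1/n}D$, and $\mu(A_i)\gtrsim_n \cH^{n-1}(\partial\Omega)/k$. The lemma is false, and the whole proof rests on it. The boundary measure can concentrate far below scale $r$. Attach to one end of a thin cylinder $B^{n-1}_{\epsilon}\times(0,D)$ a small Lipschitz piece contained in a ball $B$ of radius $r/10$, whose boundary oscillates so rapidly that $\cH^{n-1}(\partial\Omega\cap B)\geq M$. Since $B$ has diameter $r/5<2r$, at most one of your $2r$-separated sets meets $B$. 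So at least $k$ of them satisfy $\mu(A_i)\leq \cH^{n-1}(\partial\Omega\setminus B)$, which is bounded independently of $M$, whereas you need $\mu(A_i)\gtrsim_n M/k$.

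The doubling property of $\bR^n$ does not exclude this. The Grigor'yan--Netrusov--Yau construction outputs annuli $\{r_i<|x-a_i|<R_i\}$ whose radii are dictated by $\mu$ and can be arbitrarily small. The fixed-radius version (Colbois--Maerten) requires the non-concentration hypothesis $\mu(B_r(x))\leq c(n)\,\mu(\partial\Omega)/k$ for all $x$. ``Splitting further'' means passing to a scale $r'\ll r$, where the cutoffs have gradient $1/r'$ and your energy bound $r^{-2}|\Omega|$ no longer applies.

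So you have a proof only in the non-concentrated case. Suppose you repair it with variable-scale annuli. Disjointness of the doubled annuli lets you keep $k+1$ of them with $|\Omega\cap 2A_i|\lesssim|\Omega|/k$. The energy is then controlled by $\min\{s^{n-2},s^{-2}|\Omega|/k\}\leq(|\Omega|/k)^{(n-2)/n}$ for every $s>0$. This gives $\sigma_k\lesssim_n k^{2/n}|\Omega|^{(n-2)/n}/\cH^{n-1}(\partial\Omega)\lesssim_n k^{2/n}|\Omega|^{-1/n}$. Like \eqref{eq-CEG11-1.3} combined with the isoperimetric inequality, this yields $D\sigma_k\lesssim_n k^{1+2/n}$ only when $D\lesssim k|\Omega|^{1/n}$. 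In $n=2$ the repaired argument does close, since $\cH^1(\partial\Omega)\geq 2D$.

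What is missing is an argument for the large-diameter regime. There, for $n\geq 3$, $\cH^{n-1}(\partial\Omega)$ may be tiny compared with $D^{n-1}$, so no scheme that merely distributes a given boundary measure can see $D$. This is why the Bogosel--Bucur--Giacomini proof recalled in the introduction splits into two cases. After normalizing $|\Omega|=1$, \eqref{eq-CEG11-1.3} plus the isoperimetric inequality handles $D\lesssim k$. For large $D$, the annular decomposition (Lemma~\ref{lem-BBG17-4.1}) uses connectedness and the relative isoperimetric inequality to \emph{create} boundary mass from volume in disjoint wide annuli, giving $\sigma_k\lesssim_n k/D$. The paper's Lemma~\ref{lem-levelhyperplane} does the analogous thing in balls of radius $\rho\sim D/k$ centred on the level hyperplanes $H_{t_i}$. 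This proves the stronger Theorem~\ref{thm-main1}, $D\sigma_k\lesssim_n k$, and hence the statement.
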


After normalizing the volume, their proof splits into two cases. In the case of large diameter, the key ingredient is a delicate annular decomposition, which produces disjoint test functions and already yields a bound linear in $k$. In the complementary case, they combine the classical isoperimetric inequality with the uniform estimate
\begin{align}\label{eq-CEG11-1.3}
(\cH^{n-1}(\partial\Omega))^{\frac{1}{n-1}}\sigma_k(\Omega)\leq C(n)k^{\frac{2}{n}}
\end{align}
of Colbois, El Soufi, and Girouard \cite[Theorem 1.2]{CEG11}. This is where the additional factor $k^{\frac{2}{n}}$ arises. In dimension $2$, however, we have the basic fact $2D(\Omega)\leq \cH^1(\partial\Omega)$. Combining this with the sharp universal planar estimate $\cH^1(\partial\Omega)\sigma_k(\Omega)\leq 8\pi k$ of Girouard, Karpukhin, and Lagac\'e \cite{GKL21} (see also \cite[Theorem 3.16]{CGGS24}) yields
\[
D(\Omega)\sigma_k(\Omega)\leq 4\pi k.
\]
This raises the question whether a linear bound in $k$ continues to hold in all dimensions. Along the same lines, the survey \cite{CGGS24} notes that
\[
\textit{It would be interesting to investigate the optimality of the power of $k$.}
\]
We answer this question by combining the annular decomposition with a translation argument within level hyperplanes. Our first main result is the following.
\begin{thm}\label{thm-main1}
Let $\Omega\subset \bR^n$ be a bounded Lipschitz domain. Then there exists a constant $C(n)>0$ such that for every $k\in \bN$, we have
\begin{align}\label{eq-main1}
D(\Omega)\sigma_k(\Omega)\leq C(n)k.
\end{align}
Moreover, the exponent $1$ of $k$ is optimal.
\end{thm}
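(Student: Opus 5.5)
We plan to construct, for each $k$, a family of $k+1$ (or a fixed multiple of $k$) test functions in $H^1(\Omega)$ with pairwise disjoint supports, each with Rayleigh quotient at most $C(n)k/D(\Omega)$. The min-max characterization \eqref{eq-steklov-minmax} then gives \eqref{eq-main1}, since disjointly supported functions span a subspace on which the quotient is bounded by the maximum of the individual quotients. By scaling invariance of $D(\Omega)\sigma_k(\Omega)$ we normalize $D(\Omega)=1$. Fix two points $p,q\in\overline\Omega$ with $|p-q|=1$ and let $e=q-p$. The natural test functions are piecewise linear functions of the height $t=\langle x-p,e\rangle\in[0,1]$, i.e.\ functions of a single coordinate. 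Slab functions of this type have gradients of size about $k$, which is the source of the linear factor.

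\textbf{Step 1 (slab decomposition).} Split $[0,1]$ into $4k$ intervals of length $1/(4k)$, and on each interval $I_j$ take the tent function $u_j(x)=\phi_j(t)$ with $|\phi_j'|\le 8k$ and $\phi_j=1$ on the middle half of $I_j$. Then $\int_\Omega|\nabla u_j|^2\le 64k^2|\Omega\cap S_j|$, where $S_j$ is the slab over $I_j$. Moreover $\int_{\partial\Omega}u_j^2\ge \cH^{n-1}(\partial\Omega\cap S_j')$, where $S_j'$ is the inner half-slab. Since $\Omega$ stretches from $t=0$ to $t=1$, each hyperplane $\{t=s\}$ meets $\Omega$. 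By the coarea formula applied to the projection onto the hyperplane direction (or a direct projection argument), $\cH^{n-1}(\partial\Omega\cap S_j')\ge 2\,\cH^{n-1}(P(\Omega\cap S_j'))$, where $P$ is orthogonal projection onto $e^\perp$. This holds because almost every line parallel to $e$ through $P(\Omega\cap S_j')$ either hits $\partial\Omega$ twice inside the slab or crosses the whole slab. The bad lines are exactly those that cross the whole slab inside $\Omega$. Those lines are the problem to handle.

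\textbf{Step 2 (handling the bad case).} We need $k^2|\Omega\cap S_j|\lesssim k\,\cH^{n-1}(\partial\Omega\cap S_j')$ for a positive proportion of indices $j$. Since the slab has width $1/k$, $|\Omega\cap S_j|\le k^{-1}\sup_s\cH^{n-1}(\Omega\cap\{t=s\})$, so it suffices to compare cross-sectional area with boundary area in the slab. When this fails, the domain is ``fat'' in the slab: its sections are large compared with the boundary area. In that regime we switch to the translation argument within level hyperplanes suggested in the introduction. Inside a section $\Omega\cap\{t=s\}$, which has $(n-1)$-diameter at most $1$, we instead use test functions depending on a coordinate of $e^\perp$, translating a family of parallel hyperplanes within the section. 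We also use the annular decomposition of \cite{BBG17} around points to produce disjoint pieces whose boundary mass controls their volume times $k$. Concretely, we first try the following: by pigeonhole over the $4k$ slabs together with a second decomposition in a transverse direction (annuli centered at a suitable point, with radii in geometric or arithmetic progression of step $1/k$), at least $k+1$ of the resulting cells satisfy the isoperimetric-type comparison $|\Omega\cap A|\le C k^{-1}\cH^{n-1}(\partial\Omega\cap A')$. The total number of cells with the reverse inequality is bounded by using that $\sum_A|\Omega\cap A|\le|\Omega|$. This is the same mechanism as in the large-diameter case of \cite{BBG17}, but here no volume normalization is available, so the fat case must be handled by the relative isoperimetric inequality in the cells instead.

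\textbf{Step 3 (optimality).} For sharpness we take a domain with $D\sigma_k\gtrsim k$ for all $k$, for instance the ball or a thin cylinder/disk. For the unit ball, $\sigma_k=\ell$ for multiplicity blocks of size about $\ell^{n-1}$, giving only $k^{1/(n-1)}$. So we instead use a thin slab-like domain, e.g.\ $[0,1]\times[0,\varepsilon]^{n-1}$ or a long thin cylinder of length $1$. There the Steklov eigenvalues below $1/\varepsilon$ behave like $\sigma_k\approx \varepsilon\,(\pi k)^2/\text{(boundary factor)}$. Choosing $\varepsilon\sim 1/k$ gives $\sigma_k\sim k$ with $D\sim 1$, so no exponent smaller than $1$ is possible.

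The main obstacle is Step 2: controlling the fat slabs without a volume normalization. I expect the difficulty is to guarantee enough good cells uniformly in the geometry, and this is where the translation within hyperplanes must be made precise.
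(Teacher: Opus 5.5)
Your proposal has a genuine gap: the upper bound is never proved. The target inequality in Step~2, $|\Omega\cap A|\lesssim k^{-1}\cH^{n-1}(\partial\Omega\cap A')$ on $k+1$ disjoint cells, is the right one. But you give no mechanism that produces such cells, and the one you suggest fails.

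\textbf{Step 1 cannot give a linear bound on fat domains.} For a ball normalized to $D=1$, a middle slab of width $\sim 1/k$ has $|\Omega\cap S_j|\asymp\cH^{n-1}(\partial\Omega\cap S_j)\asymp 1/k$, so its quotient is $\asymp k^2$. Only the $O(1)$ slabs near the poles reach $O(k)$. Your projection inequality is also false as stated, for the reason you note yourself. Functions of a transverse coordinate have the same defect.

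\textbf{The counting in Step~2 has nothing to work with.} Being bad gives no lower bound on $|\Omega\cap A|$ in terms of $|\Omega|/k$. Since $D\sigma_k$ is scale invariant while $|\Omega|$ is not, such a pigeonhole only works after a volume normalization, and that normalization is exactly what costs the factor $k^{2/n}$ in \cite{BBG17}. The relative isoperimetric inequality does not rescue a fixed grid either: a cell lying deep inside a fat $\Omega$ contains no boundary at all.

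\textbf{The missing idea is to move the cells.} Put $\rho=D/(5(k+2))$ and $t_i=iD/(k+2)$ for $i=1,\dots,k+1$, and pick $x\in\Omega\cap H_{t_i}$.
\begin{itemize}
\item \emph{Fat case.} Suppose $|\Omega\cap B_\rho(x)|>\delta_n\rho^n$ with $\delta_n<|B_1|/2$. Slide the center along $H_{t_i}$ towards infinity. By continuity of $z\mapsto|\Omega\cap B_\rho(z)|$, some $z_i\in H_{t_i}$ has $|\Omega\cap B_\rho(z_i)|=\delta_n\rho^n$ exactly. The relative isoperimetric inequality then gives $\cH^{n-1}(\partial\Omega\cap B_\rho(z_i))\gtrsim_n\rho^{n-1}$. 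The radial cutoff equal to $1$ on $B_\rho(z_i)$ and vanishing outside $B_{2\rho}(z_i)$ has energy at most $\rho^{-2}|B_{2\rho}|\lesssim_n\rho^{n-2}$. Bounding the energy by the volume of the ball rather than of $\Omega$ is what makes fatness irrelevant and removes any need for normalization.
\item \emph{Thin case.} If $|\Omega\cap B_\rho(x)|\le\delta_n\rho^n$, then $\Omega$ is thin near $x$. Apply Lemma~\ref{lem-BBG17-4.1} on $A_{\rho/4,3\rho/4}(x)$ with $m=\delta_n\rho^n$ and $\lambda=1/\rho$, taking $\delta_n$ small so that $L\le\rho/2$. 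This yields a test function with quotient $\le 1/\rho$; alternative (b) is ruled out by connectedness.
\end{itemize}
Since $|z_i-z_j|\ge|t_i-t_j|\ge 5\rho$, the $k+1$ supports are disjoint, and $\sigma_k\lesssim_n 1/\rho\lesssim_n k/D$.

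\textbf{Step~3.} Your thin-cylinder example is fine in substance. However, the fixed-$k$ asymptotics $\sigma_k/\varepsilon\to\pi^2k^2/(n-1)$ cannot simply be evaluated at $\varepsilon\sim 1/k$. To make it rigorous, cut $B_1^{n-1}\times(0,k)$ into $k$ unit cylinders. Restrict to the codimension-$k$ subspace of functions with zero mean on each piece's Steklov boundary; on it the quotient is bounded below by the positive first nonzero eigenvalues of the mixed Steklov--Neumann problems on the pieces. This gives $\sigma_k\ge c(n)>0$, hence $D\sigma_k\gtrsim_n k$ after rescaling by $1/k$. The paper instead uses necklaces of $k$ nearly touching unit balls, for which $D\to 2k$ and $\sigma_k\to 1$.
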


A related volume-diameter bound was obtained by Al Sayed, Bogosel, Henrot, and Nacry \cite{ABHN21} under the additional assumption of convexity.
\begin{thm}[{\cite[Proposition 2.2]{ABHN21}}]\label{thm-ABHN21-2.2}
Let $\Omega\subset \bR^n$ be a bounded convex domain. Then for every $k\in \bN$, there exists a constant $C(n,k)>0$ such that
\[
D(\Omega)\sigma_k(\Omega)\leq C(n,k)\left(\frac{|\Omega|^{\frac{1}{n}}}{D(\Omega)}\right)^{\frac{n}{n-1}}.
\]
\end{thm}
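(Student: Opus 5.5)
By scale invariance, both $D(\Omega)\sigma_k(\Omega)$ and $|\Omega|^{1/n}/D(\Omega)$ are unchanged under dilations, so I would normalize $D(\Omega)=1$. The claim then reads
\[
\sigma_k(\Omega)\le C(n,k)\,|\Omega|^{\frac{1}{n-1}}.
\]
The plan has three steps: describe the shape of $\Omega$ through John's ellipsoid, build $k+1$ test functions with disjoint supports that depend on one coordinate only, and compare the Dirichlet energy with the boundary mass slab by slab.

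\textbf{Step 1 (shape of $\Omega$).} By John's theorem there is an ellipsoid $E$, with semi-axes $a_1\ge a_2\ge\dots\ge a_n$ along orthonormal directions $e_1,\dots,e_n$ and centred at some point $c$, such that
\[
E\subset\Omega\subset c+n(E-c).
\]
Since $D(\Omega)=1$, we get $a_1\simeq 1$, and moreover $|\Omega|\simeq_n a_2\cdots a_n$. Because $a_n$ is the smallest of $a_2,\dots,a_n$, it follows that
\[
a_n\le (a_2\cdots a_n)^{\frac{1}{n-1}}\lesssim_n |\Omega|^{\frac{1}{n-1}}.
\]
It therefore suffices to prove $\sigma_k(\Omega)\le C(n)\,k^2 a_n$.

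\textbf{Step 2 (test functions).} Place coordinates so that $x_1$ runs along $e_1$. Inside the middle portion $|x_1-c_1|\le a_1/2$ of the long axis of $E$, choose $k+1$ disjoint intervals $I_j$, each of length $\simeq a_1/k$. Let $f_j$ be a Lipschitz bump that equals $1$ on the middle half of $I_j$, vanishes outside $I_j$, and satisfies $|f_j'|\lesssim k$. Set $u_j(x)=f_j(x_1)$. These functions have disjoint supports, so their span $S$ is $(k+1)$-dimensional and the Rayleigh quotient of any $u\in S$ is at most $\max_j R(u_j)$. By \eqref{eq-steklov-minmax} it is enough to bound each $R(u_j)$.

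\textbf{Step 3 (slab estimate).} Let $P_j=\Omega\cap\{x_1\in I_j\}$ be the slab.
\begin{itemize}
\item \emph{Energy.} Every cross-section of $\Omega$ orthogonal to $e_1$ has area at most $C(n)\,a_2\cdots a_n$, since $\Omega\subset c+n(E-c)$. Hence $\int_\Omega|\nabla u_j|^2\lesssim k^2\,|P_j|\lesssim k\,a_2\cdots a_n$.
\item \emph{Boundary mass.} Write $P_j'$ for the part of $\Omega$ above the middle half of $I_j$, where $u_j=1$. Since $\Omega$ is convex, each line parallel to $e_n$ that meets the interior of $\Omega$ crosses $\partial\Omega$ in exactly two points. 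Consequently $\mathcal{H}^{n-1}(\partial\Omega\cap\{x_1\in \tfrac12 I_j\})$ is at least twice the area of the projection of $P_j'$ onto $e_n^\perp$.
\item \emph{Lower bound on the projection.} This projection contains the projection of $E\cap\{x_1\in\tfrac12 I_j\}$. Because $I_j$ lies in the middle part of the long axis, that set contains a cylinder of length $\simeq 1/k$ and cross-section comparable to $a_2\cdots a_{n-1}$. So $\int_{\partial\Omega}u_j^2\,d\sigma\gtrsim k^{-1}a_2\cdots a_{n-1}$.
\end{itemize}
Dividing the two bounds gives
\[
R(u_j)\lesssim \frac{k\,a_2\cdots a_n}{k^{-1}\,a_2\cdots a_{n-1}}=k^2 a_n,
\]
which combined with Step 1 proves the statement.

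The main obstacle is the boundary lower bound in Step 3. I must make sure the sections of $E$ over the chosen intervals are uniformly fat, with all constants depending only on $n$. This is why the intervals are restricted to the middle half of the long axis of $E$, and why the boundary mass is estimated through the projection onto $e_n^\perp$ rather than onto $e_1^\perp$. The price of this argument is the factor $k^2$; sharpening the $k$-dependence would require a finer argument, and the statement only asks for some constant $C(n,k)$.
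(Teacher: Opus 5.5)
Your proof is correct. It actually gives $\sigma_k(\Omega)\lesssim_n k^2|\Omega|^{\frac{1}{n-1}}$ for $D(\Omega)=1$, i.e.\ the constant $C(n)k^2$, which is stronger than the $C(n,k)$ asked for. For $n=2$, read $a_2\cdots a_{n-1}$ as the empty product; the projection is then just the middle half of $I_j$.

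The paper does not reprove this cited result. It only remarks that the original argument is a cone comparison relying on convexity, and it recovers the statement as the convex case of Theorem~\ref{thm-main2}. Your route differs genuinely from that proof.

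\emph{The paper's route.} It slices orthogonally to a diameter, where the slice-volume function $a(t)$ can be arbitrarily irregular. It therefore has to pigeonhole $k+1$ of $2k$ slabs of volume at most $|\Omega|/k$, and build one-dimensional test functions on a dyadic level set of $a$ (Lemma~\ref{lem-1d-reduction}). The boundary mass is then bounded from below by the coarea formula on $\partial\Omega$ combined with the $(n-1)$-dimensional isoperimetric inequality in each slice (Proposition~\ref{prop-rayleighreduction}).

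\emph{Your route.} John's ellipsoid gives two-sided control of the cross-sections over the middle half of the long axis. The outer ellipsoid bounds every slab's energy uniformly, so no slab selection is needed. The inner ellipsoid makes those slabs uniformly fat, so plain plateau functions suffice. Your projection bound along the shortest axis replaces slice-wise isoperimetry. The exponent comes from $a_n\le (a_2\cdots a_n)^{\frac{1}{n-1}}$, which is equivalent to $a_2\cdots a_{n-1}\ge (a_2\cdots a_n)^{\frac{n-2}{n-1}}$. That is precisely isoperimetry for the ellipsoidal cross-section, so the underlying mechanism is the same.

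\emph{Trade-off.} Your argument is shorter and more geometric. However, John's theorem uses convexity essentially, both in the energy upper bound and in the boundary lower bound, so it cannot reach the nonconvex setting of Open Question 4.37. The paper's level-hyperplane method is built precisely to survive irregular slice profiles.
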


Thus, for any family of convex domains with fixed diameter, $\sigma_k(\Omega_{\epsilon})\to 0$ for every fixed $k\in \bN$ whenever $|\Omega_{\epsilon}|\to 0$. Convexity is essential to the cone-comparison argument used in \cite{ABHN21}. It is therefore natural to ask whether the convexity assumption can be removed. This is precisely Open Question 4.37 in the survey \cite{CGGS24}:
\begin{conj}[{\cite[Open Question 4.37]{CGGS24}}]\label{conj-CGGS24-4.37}
Let $\Omega_{\epsilon}\subset \bR^n$ be a family of bounded Lipschitz domains with fixed diameter. If the volume of $\Omega_{\epsilon}$ tends to $0$ as $\epsilon\to 0$, can one say that all the eigenvalues of $\Omega_{\epsilon}$ tend to $0$?
\end{conj}

Using our level hyperplane method, we remove the convexity assumption and prove the following result.
\begin{thm}\label{thm-main2}
Let $\Omega\subset \bR^n$ be a bounded Lipschitz domain. Then there exists a constant $C(n)>0$ such that for every $k\in \bN$, we have
\begin{align}\label{eq-main2}
D(\Omega)\sigma_k(\Omega)\leq C(n)k^2\left(\frac{|\Omega|^{\frac{1}{n}}}{D(\Omega)}\right)^{\frac{n}{n-1}}.
\end{align}
Furthermore, the exponent $\frac{n}{n-1}$ of the volume-diameter ratio is optimal; with this exponent fixed, the quadratic dependence on $k$ is also optimal.
\end{thm}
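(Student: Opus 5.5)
We normalize $D(\Omega)=1$. Both sides of \eqref{eq-main2} are invariant under dilation, since $\sigma_k$ scales like $D^{-1}$ and the ratio $|\Omega|^{1/n}/D$ is scale-free. Set $V=|\Omega|$, so the target becomes $\sigma_k(\Omega)\leq C(n)k^2V^{\frac{1}{n-1}}$.

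\textbf{Setup and test functions.} Choose $p,q\in\overline\Omega$ with $|p-q|=1$ and coordinates with $p_1=0$, $q_1=1$. Since $\Omega$ is connected, every slice $\omega_t=\Omega\cap\{x_1=t\}$, $t\in(0,1)$, is nonempty. Write $a(t)=\cH^{n-1}(\omega_t)$, so $\int_0^1a=V$. Split $[0,1]$ into $4(k+1)$ consecutive slabs of width $h\sim 1/k$. By pigeonhole, at least $2(k+1)$ of them satisfy $|\Omega\cap S|\leq v:=CV/k$; keep $k+1$ of these that are pairwise non-adjacent. On each kept slab $S$ take $u=f(x_1)$, a plateau function equal to $1$ on the middle half $S'$ of $S$, vanishing outside $S$, and with slopes of size $C/h$. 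These $k+1$ functions have disjoint supports. By \eqref{eq-steklov-minmax} it therefore suffices to bound each Rayleigh quotient:
\[
\frac{\int_\Omega|\nabla u|^2}{\int_{\partial\Omega}u^2}\leq \frac{C h^{-2}\,|\Omega\cap S|}{\cH^{n-1}(\partial\Omega\cap S')}.
\]

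\textbf{Key geometric lemma (level hyperplane/translation argument).} We need to show that
\[
A:=\cH^{n-1}(\partial\Omega\cap S')\geq c(n)\,v^{\frac{n-2}{n-1}}h^{\frac{1}{n-1}},
\]
which is the lateral area of a cylinder of length $h$ and volume $v$. Granting this, the quotient is at most $C v^{1/(n-1)}h^{-2-1/(n-1)}$, which is increasing in $v$. Substituting $v\sim V/k$ and $h\sim1/k$ gives exactly $Ck^2V^{1/(n-1)}$.

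The lemma will rest on two inequalities.
\begin{enumerate}[(i)]
\item By the coarea formula on $\partial\Omega$ (using $|\nabla_{\partial\Omega}x_1|\leq1$) and the isoperimetric inequality in $\bR^{n-1}$,
\[
A\geq\int_{S'}\cH^{n-2}(\partial\omega_t)\,dt\geq c\int_{S'}a(t)^{\frac{n-2}{n-1}}\,dt.
\]
\item Projecting $\partial\Omega$ along $e_1$, i.e.\ translating slices within the level hyperplanes, gives
\[
|a(t)-a(s)|\leq\cH^{n-1}\bigl(\partial\Omega\cap\{s<x_1<t\}\bigr).
\]
\end{enumerate}
Let $M=\sup_{S}a$. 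Either $a$ drops by $M/2$ somewhere inside $S$, and then (ii) controls $A$ from below by about $M$; or $a\geq M/2$ on $S'$, and then (i) gives $A\gtrsim hM^{\frac{n-2}{n-1}}$.

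\textbf{Main obstacle.} I expect the main difficulty to be the first alternative when $v$ is tiny, roughly $v<h^n$. There the bound $A\gtrsim M\geq v/h$ only yields a quotient of order $1/h\sim k$, which is weaker than $k^2V^{1/(n-1)}$ when $V$ is small. Two further points need care: the drop of $a$ may occur in $S\setminus S'$, outside the region where the denominator is measured, and the boundary mass counted in (ii) must be placed where $u=1$.

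To handle this I would first try adjusting the cut-off functions to the profile of $a$. Concretely, I would choose the plateau region and ramp widths adaptively, for example at a point where $a$ is comparable to its slab maximum. Failing that, I would iterate the pigeonhole over dyadic scales of $a$. As a fallback, I would combine the construction with Theorem~\ref{thm-main1} only in the regime where the two bounds agree. I would also try replacing the pure $x_1$-dependence of $u$ by functions cut off in the transverse directions, following the translation idea within level hyperplanes.

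\textbf{Optimality.} I would test the bound on thin cylinders $\Omega_r=[0,1]\times B^{n-1}_r$. Separation of variables gives $\sigma_k(\Omega_r)\asymp k^2 r$ for $k\lesssim 1/r$: longitudinal modes $\cos(\pi kx_1)$ have energy $\sim k^2 r^{n-1}$ against boundary mass $\sim r^{n-2}$. Since $V^{1/(n-1)}\asymp r$, the family $\Omega_r$ attains \eqref{eq-main2} up to constants. Letting $r\to0$ then shows that no exponent larger than $\frac{n}{n-1}$ on the ratio is possible. With that exponent fixed, letting $k$ grow in the range $k\lesssim1/r$ shows that the power $k^2$ cannot be lowered.
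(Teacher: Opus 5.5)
There is a genuine gap in the upper bound. It is exactly the obstacle you flagged, and it is not a technicality. For $n\geq 3$ your key geometric lemma $\cH^{n-1}(\partial\Omega\cap S')\geq c\,|\Omega\cap S|^{\frac{n-2}{n-1}}h^{\frac{1}{n-1}}$ is false. Take $\Omega\cap S$ to be a tube $B^{n-1}_{\epsilon}\times S$ plus a ball of radius $h/8$ centered on the axis inside $S\setminus S'$. Then the left side is about $\epsilon^{n-2}h\to 0$, while the right side is about $h^{n-1}$. Keeping the boundary mass on the ramps does not rescue the plateau function. That means using (i) with weight $f(t)^2$ on all of $S$, which gives $\int_{\partial\Omega}u^2\gtrsim\int_S a^{\frac{n-2}{n-1}}f^2$. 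Even so, no profile $f$ fixed independently of $a$ can work. Consider a pancake of radius $h$ and thickness $\delta$ pressed against an endpoint of $S$ and attached to a thin tube. Your plateau function has energy $\approx h^{n-3}\delta$ but boundary mass $O(h^{n-3}\delta^2+\epsilon^{n-2}h)$, hence quotient $\approx 1/\delta$. The target $|\Omega\cap S|^{\frac{1}{n-1}}h^{-2-\frac{1}{n-1}}\approx\delta^{\frac{1}{n-1}}h^{-1-\frac{1}{n-1}}$ instead tends to $0$, and nothing prevents this from happening in every slab of your grid. The slice-variation bound (ii) cannot repair this, since, as you observed, it only gives a quotient of order $1/h$ when the slices are small.

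The missing idea is to let the one-dimensional profile adapt to $a$; with that, (ii) is not needed at all.
\begin{itemize}
\item Rescale a slab to $(0,1)$ with $\int_0^1a=1$ and take the dyadic level sets $E_j=\{2^{-j}<a\leq 2^{-j+1}\}$.
\item By Markov's inequality $\sum_j|E_j|\geq\frac12$, so some $j$ has $|E_j|\geq c_\gamma 2^{-\frac{(1-\gamma)j}{2}}$.
\item Put $s(t)=|E_j\cap(0,t)|$ and $f=T(s/|E_j|)$, where $T$ is the tent function.
\item Then $f'$ vanishes off $E_j$, so $\int a|f'|^2\leq 2^{3-j}/|E_j|$, while $\int a^{\gamma}f^2\geq 2^{-\gamma j}|E_j|/3$. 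The ratio is therefore bounded by $C(\gamma)$ for every $\gamma<1$.
\item Undoing the scaling gives the bound $C(\gamma)(\int_S a)^{1-\gamma}h^{-3+\gamma}$. For $\gamma=\frac{n-2}{n-1}$ this is exactly $v^{\frac{1}{n-1}}h^{-2-\frac{1}{n-1}}$.
\end{itemize}
Together with the weighted form of your (i) and your pigeonhole step, this is the paper's proof. Your suggestion of iterating over dyadic scales of $a$ points in this direction, but it lacks the decisive feature: $f$ increases only on the selected level set, so the energy sees $a$ only where $a\approx 2^{-j}$. As written, the upper bound is not established.

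On optimality, using thin cylinders also for the $k^2$ claim is legitimate, where the paper uses necklace domains. However, the lower bound $\sigma_k(\Omega_r)\gtrsim k^2r$, uniformly for $k\lesssim 1/r$, needs an actual argument, because the end caps also carry the Steklov condition. One option is a cross-sectional trace inequality $\int_{\partial\Omega_r}u^2\lesssim\frac1r\int u^2+\frac1k\int|\nabla u|^2$, compared with the Neumann eigenvalues $\pi^2k^2$ of the cylinder.
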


As a corollary, Theorem \ref{thm-main2} gives an affirmative answer to Conjecture \ref{conj-CGGS24-4.37}.
\begin{cor}
Conjecture \ref{conj-CGGS24-4.37} is true.
\end{cor}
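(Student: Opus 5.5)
The corollary follows directly from Theorem \ref{thm-main2}. The only work is to rewrite the right-hand side of \eqref{eq-main2} in terms of the volume and the fixed diameter, so that for each fixed $k$ it visibly tends to $0$. No new spectral input is needed, and I expect no step to be a real obstacle. The one point to keep in mind is that $C(n)$ in Theorem \ref{thm-main2} is independent of both $\Omega$ and $k$, which is exactly what makes the bound uniform along the family.

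Let $\Omega_\epsilon\subset\bR^n$ be bounded Lipschitz domains with $D(\Omega_\epsilon)=D>0$ fixed and $|\Omega_\epsilon|\to 0$. Fix $k\in\bN$ and apply \eqref{eq-main2} to each $\Omega_\epsilon$:
\[
\sigma_k(\Omega_\epsilon)\leq C(n)\,k^2\,D^{-1}\left(\frac{|\Omega_\epsilon|^{\frac1n}}{D}\right)^{\frac{n}{n-1}}
= C(n)\,k^2\,D^{-\frac{2n-1}{n-1}}\,|\Omega_\epsilon|^{\frac{1}{n-1}}.
\]
Here $C(n)$, $k$ and $D$ are fixed, and the exponent $\frac1{n-1}$ is positive since $n\ge2$. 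Hence the right-hand side tends to $0$ as $\epsilon\to0$.

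Since $\sigma_k(\Omega_\epsilon)\ge 0$, it follows that $\sigma_k(\Omega_\epsilon)\to 0$ for every $k$. This answers Conjecture \ref{conj-CGGS24-4.37} affirmatively. The quantitative form above is stronger than what the conjecture asks: the rate is $|\Omega_\epsilon|^{1/(n-1)}$, and the convergence is uniform over $k\le K$ for any fixed $K$.
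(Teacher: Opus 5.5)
Your proposal is correct and is essentially the paper's argument. The paper simply states that Theorem \ref{thm-main2} gives the affirmative answer. Your computation $\sigma_k(\Omega_\epsilon)\le C(n)k^2D^{-\frac{2n-1}{n-1}}|\Omega_\epsilon|^{\frac{1}{n-1}}$ matches the form $k^2|\Omega|^{\frac{1}{n-1}}D^{-2-\frac{1}{n-1}}$ obtained at the end of the paper's proof.
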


Recently, Provenzano proved the volume-normalized Weyl-type estimate \cite[Theorem 1.2]{Pro26}. After shifting the eigenvalue indexing and absorbing the resulting factor into the dimensional constant, we may write the estimate as
\begin{align}\label{eq-Pro26-1.8}
|\Omega|^{\frac{1}{n}}\sigma_k(\Omega)\leq C(n)k^{\frac{1}{n-1}}.
\end{align}
Although the estimate is stated for smooth bounded domains, it follows from \cite[Proposition 2.24]{CGGS24} that the same estimate holds for bounded Lipschitz domains. The exponent $\frac{1}{n-1}$ is optimal; sharpness follows from the homogenization procedure developed by Girouard, Henrot, and Lagac\'e \cite{GHL21}. It is worth noting that the inequalities \eqref{eq-main1}, \eqref{eq-main2}, and \eqref{eq-Pro26-1.8} all have the form
\[
D(\Omega)\sigma_k(\Omega)\leq C(n)k^{\alpha}\left(\frac{|\Omega|^{\frac{1}{n}}}{D(\Omega)}\right)^{\beta}.
\]
The corresponding exponent pairs $(\alpha,\beta)$ are 
\[
(1,0),\qquad (2,\frac{n}{n-1}),\qquad\text{and}\qquad (\frac{1}{n-1},-1).
\]
For each fixed $\beta\leq \frac{n}{n-1}$, define the critical exponent
\[
\alpha_{\ast}=\alpha_{\ast}(\beta):=\inf\left\{\alpha:D(\Omega)\sigma_k(\Omega)\leq C(n,\alpha,\beta)k^{\alpha}\left(\frac{|\Omega|^{\frac{1}{n}}}{D(\Omega)}\right)^{\beta}\right\},
\]
where the inequality is required to hold uniformly over all bounded Lipschitz domains $\Omega\subset \bR^n$ and all $k\in \bN$. The classical isodiametric inequality implies that $\alpha_{\ast}(\beta)$ is nondecreasing in $\beta$. Moreover, by interpolating between Theorem \ref{thm-main1} and Theorem \ref{thm-main2} and using the necklace construction in Section \ref{sec-3}, we obtain
\[
\alpha_{\ast}(\beta)=1+\frac{n-1}{n}\beta,\qquad \forall 0\leq \beta\leq \frac{n}{n-1}.
\]
However, the optimal trade-off for negative $\beta$ is not determined here, since our sharpness constructions do not provide matching lower bounds in that range.

The remainder of this paper is organized as follows. In Section \ref{sec-2}, we collect basic properties of Steklov eigenvalues and the relevant isoperimetric inequalities. In Section \ref{sec-3}, we use thin cylinders and necklace domains to establish the sharpness assertions. In Section \ref{sec-4}, we develop two constructions of test functions based on level hyperplanes. Finally, in Section \ref{sec-5}, we apply these constructions to prove Theorem \ref{thm-main1} and Theorem \ref{thm-main2}.

\begin{ack}
	The author is deeply grateful to Prof. Haizhong Li for drawing attention to literature relevant to this work and for his continued interest and encouragement. The author would also like to thank Dr. Yao Wan for valuable discussions on the Steklov eigenvalue problem. The author is supported by the Shuimu Tsinghua Scholar Program.
\end{ack}

\section{Preliminaries}\label{sec-2}

Throughout this paper, for nonnegative quantities $A$ and $B$, we write $A\lesssim_nB$ if $A\leq C(n)B$ for some constant $C(n)>0$ depending only on $n$. The value of $C(n)$ may change from line to line. We write $A\gtrsim_n B$ if $B\lesssim_nA$, and $A\asymp_nB$ if both inequalities hold.

\subsection{Basic properties}
Let $\Omega\subset \bR^n$ be a bounded Lipschitz domain. The diameter $D(\Omega)$ of $\Omega$ is defined by
\[
D(\Omega)=\sup\{|x-y|:x,y\in \Omega\}.
\]
Given $c>0$, define the dilated domain $c\Omega$ by
\[
c\Omega:=\{cx:x\in \Omega\}.
\]
It follows that $D(c\Omega)=cD(\Omega)$. For a function $u\in H^1(\Omega)$ with nonzero boundary trace, its Rayleigh-Steklov quotient is written as
\begin{align}\label{eq-Rayleighquotient}
\cR_{\Omega}(u):=\frac{\int_{\Omega}|\nabla u|^2dx}{\int_{\partial\Omega}u^2d\sigma}.
\end{align}
Define the corresponding scaled function on $c\Omega$ by
\[
u_c(x):=u\left(\frac{x}{c}\right),\qquad x\in c\Omega.
\]
A change of variables then gives
\[
\cR_{c\Omega}(u_c)=\frac{\int_{c\Omega}|\nabla u_c|^2dx}{\int_{c\partial\Omega}u_c^2d\sigma}=\frac{c^n\int_{\Omega}c^{-2}|\nabla u|^2dx}{c^{n-1}\int_{\partial\Omega}u^2d\sigma}=c^{-1}\cR_{\Omega}(u).
\]
Consequently, we obtain the basic scaling rule for Steklov eigenvalues:
\[
\sigma_k(c\Omega)=c^{-1}\sigma_k(\Omega).
\]
The following elementary test function lemma will be useful:
\begin{lem}\label{lem-testfunction}
Let $\Omega\subset \bR^n$ be a bounded Lipschitz domain. Fix $k\in \bN$ and suppose that $u_1,\ldots,u_{k+1}\in H^1(\Omega)$ satisfy $\int_{\partial\Omega}u_i^2d\sigma>0$ for all $i=1,\ldots,k+1$. Assume moreover that, whenever $i\neq j$,
\[
\int_{\Omega}\nabla u_i\cdot \nabla u_jdx=\int_{\partial\Omega}u_iu_jd\sigma=0.
\]
Then we have
\[
\sigma_k(\Omega)\leq \max_{1\leq i\leq k+1}\cR_{\Omega}(u_i).
\]
\end{lem}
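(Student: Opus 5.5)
The plan is to plug the span $S=\mathrm{span}\{u_1,\ldots,u_{k+1}\}$ into the min-max characterization \eqref{eq-steklov-minmax}. This requires two things: that $S$ is genuinely $(k+1)$-dimensional, and that the Rayleigh quotient on $S$ is bounded by $\max_i \cR_{\Omega}(u_i)$.

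First, linear independence. Suppose $\sum_i c_i u_i=0$ in $H^1(\Omega)$. Then the boundary traces also satisfy $\sum_i c_i u_i|_{\partial\Omega}=0$. Pair this identity with $u_j$ in $L^2(\partial\Omega)$ and use the boundary orthogonality. This gives $c_j\int_{\partial\Omega}u_j^2\,d\sigma=0$, and since $\int_{\partial\Omega}u_j^2\,d\sigma>0$, we get $c_j=0$ for every $j$. Hence $S\in S_{k+1}$. The same computation shows that every nonzero $u\in S$ has nonzero trace, so $\cR_{\Omega}(u)$ is defined on $S\setminus\{0\}$.

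Second, the quotient bound. Take $u=\sum_i c_iu_i\neq 0$. The two orthogonality hypotheses make all cross terms vanish, so
\[
\int_\Omega|\nabla u|^2\,dx=\sum_i c_i^2\int_\Omega|\nabla u_i|^2\,dx,\qquad \int_{\partial\Omega}u^2\,d\sigma=\sum_i c_i^2\int_{\partial\Omega}u_i^2\,d\sigma.
\]
Write $M=\max_i\cR_{\Omega}(u_i)$. Termwise,
\[
c_i^2\int_\Omega|\nabla u_i|^2\,dx\le M\,c_i^2\int_{\partial\Omega}u_i^2\,d\sigma .
\]
Summing over $i$ gives $\cR_{\Omega}(u)\le M$, a mediant-type inequality. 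The min-max formula \eqref{eq-steklov-minmax} then yields $\sigma_k(\Omega)\le\max_{u\in S\setminus\{0\}}\cR_{\Omega}(u)\le M$.

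There is no real obstacle here. The only point needing care is that linear independence must come from boundary orthogonality together with positivity of the boundary norms, not from the $H^1$ structure. This is also what guarantees the denominator of the quotient never vanishes on $S\setminus\{0\}$.
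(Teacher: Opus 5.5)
Your proposal is correct and follows essentially the same route as the paper. It obtains linear independence from boundary orthogonality together with positive boundary norms, uses the vanishing cross terms to bound the Rayleigh quotient on $S=\mathrm{span}\{u_1,\ldots,u_{k+1}\}$ by $\max_i \cR_{\Omega}(u_i)$, and then applies the min-max characterization. Your explicit remark that every nonzero $u\in S$ has nonzero trace, so that the quotient is defined on $S\setminus\{0\}$, is a small point the paper leaves implicit.
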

\begin{proof}
Consider any linear combination
\[
u=\sum_{i=1}^{k+1}c_iu_i.
\]
First, we show that the functions $u_i$ are linearly independent. If $u\equiv 0$, then we have
\[
0=\int_{\partial\Omega}u^2d\sigma=\sum_{i=1}^{k+1}c_i^2\int_{\partial\Omega}u_i^2d\sigma.
\]
Since each boundary integral in the sum is positive, it follows that $c_1=\cdots=c_{k+1}=0$, which proves the linear independence. 

Now, we set the $(k+1)$-dimensional subspace to be
\[
S:=\mathrm{span}\{u_1,\ldots,u_{k+1}\}.
\]
Note that
\[
\cR_{\Omega}(u)=\frac{\int_{\Omega}|\sum_{i=1}^{k+1}c_i\nabla u_i|^2dx}{\int_{\partial\Omega}(\sum_{i=1}^{k+1}c_iu_i)^2d\sigma}=\frac{\sum_{i=1}^{k+1}c_i^2\int_{\Omega}|\nabla u_i|^2dx}{\sum_{i=1}^{k+1}c_i^2\int_{\partial\Omega}u_i^2d\sigma}\leq \max_{1\leq i\leq k+1}\cR_{\Omega}(u_i).
\]
The min-max variational characterization \eqref{eq-steklov-minmax} therefore gives
\[
\sigma_k(\Omega)\leq \max_{u\in S\setminus\{0\}}\cR_{\Omega}(u)\leq \max_{1\leq i\leq k+1}\cR_{\Omega}(u_i).
\]
\end{proof}
\begin{rmk}
The orthogonality assumptions hold, in particular, when the functions have pairwise essentially disjoint supports.
\end{rmk}

\subsection{Isoperimetric inequalities}
We recall some basic facts in geometric measure theory. Given a measurable subset $E\subset \bR^n$ and an open set $A\subset \bR^n$, the perimeter of $E$ in $A$ is defined as
\[
P(E;A):=\sup\left\{\int_E\divg(\phi)dx:\phi\in C_c^{\infty}(A;\bR^n),\|\phi\|_{\infty}\leq 1\right\}.
\]
$E$ is said to have finite perimeter in $A$ if $P(E;A)<+\infty$. In particular, if $\Omega$ is a Lipschitz domain, then
\[
P(\Omega;A)=\cH^{n-1}(\partial\Omega\cap A).
\]
Under a dilation by a factor $c>0$, the perimeter scales according to
\[
P(cE;cA)=c^{n-1}P(E;A).
\]
When $A=\bR^n$, we simply write $P(E)$. The following isoperimetric inequality is well-known:
\begin{thm}\label{thm-isoperimetric}
Let $E\subset \bR^n$ be a measurable set of finite measure and finite perimeter. Then we have
\[
P(E)\gtrsim_n|E|^{\frac{n-1}{n}}.
\]
\end{thm}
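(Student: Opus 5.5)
The statement just given is the classical isoperimetric inequality (Theorem \ref{thm-isoperimetric}). I plan to reduce it to the Gagliardo--Nirenberg--Sobolev inequality $\|f\|_{L^{\frac{n}{n-1}}(\bR^n)}\lesssim_n\|\nabla f\|_{L^1(\bR^n)}$ for $f\in C_c^1(\bR^n)$, and then pass to sets of finite perimeter by approximation. For the Sobolev step I will use the standard argument: first write $|f(x)|\leq\int_{\bR}|\partial_i f|\,dx_i$ for each coordinate $i$. Then take the product of the $n$ bounds raised to the power $\frac{1}{n-1}$, and integrate successively in $x_1,\dots,x_n$. At each integration the generalized H\"older inequality with $n-1$ factors is applied. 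This yields $\|f\|_{\frac{n}{n-1}}\leq\prod_i\|\partial_if\|_1^{1/n}\leq\|\nabla f\|_1$.

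Next I pass from functions to the set $E$. If $|E|<\infty$ and $P(E)<\infty$, then $\chi_E\in BV(\bR^n)$ with $|D\chi_E|(\bR^n)=P(E)$. This identity follows from the definition of $P(E;A)$ with $A=\bR^n$, because the supremum over $\phi$ is exactly the total variation. Mollifying gives $f_\varepsilon=\chi_E*\rho_\varepsilon\in C^\infty$ with $f_\varepsilon\to\chi_E$ in $L^1$ and $\|\nabla f_\varepsilon\|_1\leq P(E)$. The inequality $\|\nabla f_\varepsilon\|_1\leq P(E)$ holds because $\int\nabla f_\varepsilon\cdot\phi=-\int\chi_E\divg(\phi*\rho_\varepsilon)$ and $\|\phi*\rho_\varepsilon\|_\infty\leq1$.

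The functions $f_\varepsilon$ need not have compact support, so I multiply by cutoffs $\eta_R$ with $|\nabla\eta_R|\leq 1/R$. The extra gradient term is bounded by $R^{-1}\|f_\varepsilon\|_1\leq R^{-1}|E|\to0$. Applying the Sobolev inequality to $\eta_Rf_\varepsilon$ and letting $R\to\infty$ gives $\|f_\varepsilon\|_{\frac{n}{n-1}}\lesssim_n P(E)$. By Fatou's lemma along an a.e.\ convergent subsequence, $|E|^{\frac{n-1}{n}}=\|\chi_E\|_{\frac{n}{n-1}}\leq\liminf\|f_\varepsilon\|_{\frac{n}{n-1}}\lesssim_nP(E)$.

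The main obstacle is the bookkeeping in the iterated H\"older step of Gagliardo--Nirenberg. The approximation step is routine once lower semicontinuity, or the mollification bound above, is in hand. Since the paper calls this inequality well known, I expect in practice to cite it (for example from Maggi's or Evans--Gariepy's texts) rather than reprove it. The sharp constant, attained by balls, is not needed here.
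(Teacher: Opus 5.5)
Your proposal is correct and matches what the paper relies on. The paper gives no proof of its own; it cites Evans--Gariepy, Theorem 5.11, where the isoperimetric inequality is obtained the same way: the Gagliardo--Nirenberg--Sobolev inequality is proved for smooth compactly supported functions, extended to $BV(\bR^n)$ by smooth approximation and Fatou's lemma, and applied to $\chi_E$. Your mollify-and-cutoff approximation needs only the one-sided bound $\|\nabla f_\varepsilon\|_{L^1}\leq P(E)$ rather than strict convergence of total variations, and it is a valid way to carry out that extension.
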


We denote the Euclidean ball with center $z\in \bR^n$ and radius $r>0$ by
\[
B_r(z)=\{x\in \bR^n:|x-z|<r\}.
\]
For brevity, write $B_1:=B_1(0)$. We shall also use the relative isoperimetric inequality in balls.
\begin{thm}\label{thm-isoperimetric-ball}
Let $E\subset \bR^n$ be a measurable set of finite measure and finite perimeter. Then for any ball $B_r(z)$, we have
\[
P(E;B_r(z))\gtrsim_n \min\{|E\cap B_r(z)|,|E^c\cap B_r(z)|\}^{\frac{n-1}{n}}.
\]
\end{thm}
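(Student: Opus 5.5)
By translation and dilation we reduce to the case $B_r(z)=B_1$. Put $E'=c(E-z)$ with $c=1/r$. Then $P(E';B_1)=c^{n-1}P(E;B_r(z))$ and $|E'\cap B_1|=c^n|E\cap B_r(z)|$, so $\min\{\cdot,\cdot\}^{\frac{n-1}{n}}$ also scales by $c^{n-1}$, and the inequality is scale invariant. We may also assume $P(E;B_1)<\infty$; since $E$ has finite perimeter this holds automatically. Set $u=\chi_E|_{B_1}\in BV(B_1)$, so that $|Du|(B_1)=P(E;B_1)$.

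The key analytic input is the Sobolev--Poincaré inequality for $BV$ functions on the unit ball:
\[
\|u-u_{B_1}\|_{L^{\frac{n}{n-1}}(B_1)}\lesssim_n |Du|(B_1),\qquad u_{B_1}:=\frac{1}{|B_1|}\int_{B_1}u\,dx.
\]
I would first prove it for $u\in C^\infty(B_1)\cap W^{1,1}(B_1)$ in two steps. The first step is the $L^1$ Poincaré inequality $\|u-u_{B_1}\|_{L^1}\lesssim_n\|\nabla u\|_{L^1}$, obtained by the standard compactness-contradiction argument using Rellich compactness of $W^{1,1}(B_1)\hookrightarrow L^1(B_1)$. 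The second step upgrades to $L^{\frac{n}{n-1}}$: extend $u-u_{B_1}$ to $\bR^n$ with a bounded $W^{1,1}$ extension operator for the ball (reflection across the sphere suffices), multiply by a fixed cutoff, and apply the Gagliardo--Nirenberg--Sobolev inequality $\|v\|_{L^{\frac{n}{n-1}}(\bR^n)}\lesssim_n\|\nabla v\|_{L^1(\bR^n)}$. Together these give the bound with right-hand side $\|\nabla u\|_{L^1(B_1)}$.

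For general $u\in BV(B_1)$, I would use strict approximation (Anzellotti--Giaquinta): there exist $u_j\in C^\infty(B_1)$ with $u_j\to u$ in $L^1$ and $\int|\nabla u_j|\to|Du|(B_1)$. Then $u_{j,B_1}\to u_{B_1}$, and Fatou's lemma along an a.e.\ subsequence passes the inequality to the limit. I expect this passage from smooth functions to $BV$ to be the only technically delicate step; everything else is routine. If one prefers to avoid it, the coarea formula offers an alternative route, but strict approximation is the more direct one.

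Finally, apply the inequality to $u=\chi_E$ and put $t=u_{B_1}=|E\cap B_1|/|B_1|$ and $p=\frac{n}{n-1}$. By the symmetry $E\leftrightarrow E^c$ (which preserves the perimeter in $B_1$), we may assume $|E\cap B_1|\le|E^c\cap B_1|$, so that $t\le\frac12$. Then
\[
\int_{B_1}|u-t|^p\,dx=(1-t)^p|E\cap B_1|+t^p|E^c\cap B_1|\ge 2^{-p}|E\cap B_1|.
\]
Hence
\[
P(E;B_1)\gtrsim_n |E\cap B_1|^{\frac{n-1}{n}}=\min\{|E\cap B_1|,|E^c\cap B_1|\}^{\frac{n-1}{n}},
\]
which proves the theorem.
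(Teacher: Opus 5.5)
Your proposal is correct. It is essentially the standard argument behind the result the paper cites without proof (Evans--Gariepy, Theorem 5.11): prove the Sobolev--Poincar\'e inequality $\|u-u_{B}\|_{L^{n/(n-1)}(B)}\lesssim_n |Du|(B)$ for $BV$ functions on balls by smooth strict approximation, apply it to $u=\chi_E$, and compute the $L^{n/(n-1)}$ norm of $\chi_E-t$ exactly as you do. The only cosmetic difference is that you get the $L^1$ Poincar\'e step by compactness instead of an explicit integral estimate; this costs nothing, since after scaling only the dependence of the constant on $n$ matters.
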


Both inequalities can be found, for example, in \cite[Theorem 5.11]{EG15}.

\subsection{The annular decomposition}
The localized construction developed in Section \ref{sec-4} relies on the annular estimates of \cite{BBG17}. We first introduce the notation. Given $0\leq r_1<r_2$ and $z\in \bR^n$, let $A_{r_1,r_2}(z)$ be the annulus centered at $z$ with inner radius $r_1$ and outer radius $r_2$, namely,
\[
A_{r_1,r_2}(z):=\{x\in \bR^n:r_1<|x-z|<r_2\}.
\]
The following relative isoperimetric lemma for annuli is obtained by combining Theorem \ref{thm-isoperimetric-ball} with the corresponding relative isoperimetric inequality for complements of balls. It is a key ingredient in the annular decomposition argument of \cite{BBG17}.

\begin{lem}[{\cite[Lemma 2.2]{BBG17}}]\label{lem-BBG17-2.2}
Let $m>0$ be given. Then there exists a constant $w=w(m,n)$ such that for every $r\geq 0$ and $l\geq w$, and every measurable set $E\subset A_{r,r+l}(0)$ with $|E|\leq m$, we have
\[
P(E;A_{r,r+l}(0))\gtrsim_n|E|^{\frac{n-1}{n}}.
\]
\end{lem}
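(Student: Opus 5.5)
The plan is to reduce the annular inequality to finitely many overlapping local pieces. On each piece a relative isoperimetric inequality holds with a constant depending only on $n$. The pieces are then added up using the concavity of $t\mapsto t^{\frac{n-1}{n}}$.

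\textbf{The local pieces.} Write $A:=A_{r,r+l}(0)$ and set $\rho:=l/4$. Choose a maximal $\rho$-separated set of points $\{z_i\}\subset A$ and put $Q_i:=B_{2\rho}(z_i)\cap A$. The $Q_i$ cover $A$. By a volume-packing argument their overlap multiplicity is bounded by some $N=N(n)$, independent of $r$ and $l$.

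\textbf{The key local estimate.} The claim is that, uniformly in $r\ge 0$, $l>0$ and $i$,
\[
P(E;Q_i)\gtrsim_n \min\{|E\cap Q_i|,|Q_i\setminus E|\}^{\frac{n-1}{n}},\qquad |Q_i|\gtrsim_n l^n .
\]
Both are scale invariant, so after dilating by $1/l$ we may assume $l=1$. Each $Q_i$ is then the unit-scale intersection of a ball of radius $1/2$ with the region between two concentric spheres of radii $r$ and $r+1$, where $r\in[0,\infty)$.

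This family of shapes is precompact up to rigid motions. As $r\to\infty$ the spheres flatten and $Q_i$ tends to a ball cut by parallel slabs. As $r\to 0$ the inner sphere shrinks and $Q_i$ tends to a ball minus a point, or a ball minus a small ball. All members are uniformly Lipschitz (indeed uniformly John) domains of unit size with uniformly bounded Lipschitz character. For such domains the Poincaré inequality in $BV$, and hence the relative isoperimetric inequality, holds with a uniform constant.

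An alternative route, following the hint in the paper, derives this directly from Theorem \ref{thm-isoperimetric-ball} together with its analogue for complements of balls. This works because each $Q_i$ is the intersection of a ball with at most one ball and one complement of a ball, in a uniformly transversal configuration.

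\textbf{I expect this uniformity to be the main obstacle.} The difficulty is controlling the constants near the transition regimes where $r\sim\rho$: there the inner sphere cuts $B_{2\rho}(z_i)$ at a small angle or nearly passes through it. My first attempt would be the compactness argument, making sure that $Q_i$ always contains a ball of radius $\gtrsim\rho$ and is star-shaped with respect to it.

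\textbf{Choosing $w$ and summing.} Choose $w=w(m,n)$ so large that $|Q_i|\ge c(n)l^n\ge 2m$ whenever $l\ge w$. Then $|Q_i\setminus E|\ge |Q_i|-m\ge |E\cap Q_i|$, so the minimum in the local estimate is $|E\cap Q_i|$. Using the bounded overlap and subadditivity of $t\mapsto t^{\frac{n-1}{n}}$,
\[
P(E;A)\ge \frac1N\sum_i P(E;Q_i)\gtrsim_n \sum_i |E\cap Q_i|^{\frac{n-1}{n}}\ge \Big(\sum_i|E\cap Q_i|\Big)^{\frac{n-1}{n}}\ge |E|^{\frac{n-1}{n}} .
\]
The final inequality holds because the $Q_i$ cover $A\supset E$. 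The implicit constant depends only on $n$, and $m$ enters only through the threshold $w$, as the statement requires.
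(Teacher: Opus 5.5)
\textbf{Verdict: there is a genuine gap.} The key local estimate fails. It is not merely hard to make uniform: it is false for some of your pieces, exactly in the regime you flagged. The rest of your argument is fine: the covering, the overlap bound (at most $5^n$ balls $B_{2\rho}(z_i)$ contain a given point), the volume bound $|Q_i|\gtrsim_n l^n$, the choice $w\asymp_n m^{\frac1n}$ and the final summation.

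\textbf{A piece where the local inequality fails.} Scale to $l=1$, so $Q_i=B_{1/2}(z_i)\cap A_{r,r+1}(0)$. Take $r=\frac1{10}$ and $|z_i|=\frac25$; nothing in the choice of a maximal $\rho$-separated set excludes such a centre. Then $|z_i|+r=\frac12$, so $\overline{B_r(0)}\subset\overline{B_{1/2}(z_i)}$ touches $\partial B_{1/2}(z_i)$ from inside at one point $P$. Near $P$ the piece $Q_i=B_{1/2}(z_i)\setminus\overline{B_r(0)}$ is the horn between two internally tangent spheres. Its thickness at tangential distance $s$ from $P$ is $\approx\kappa s^2$, with $\kappa=\frac{1}{2r}-1$. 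For $E_\delta=Q_i\cap B_\delta(P)$ one computes $|E_\delta|\asymp_n\kappa\delta^{n+1}$ and $P(E_\delta;Q_i)\asymp_n\kappa\delta^{n}$, hence
\[
\frac{P(E_\delta;Q_i)}{\min\{|E_\delta|,|Q_i\setminus E_\delta|\}^{\frac{n-1}{n}}}\asymp_n(\kappa\delta)^{\frac1n}\longrightarrow 0\qquad(\delta\to 0).
\]
Moving $z_i$ slightly gives honest Lipschitz pieces: the small ball either pokes out of $B_{1/2}(z_i)$ at a small angle or leaves a thin neck. Their relative isoperimetric constants still tend to $0$.

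\textbf{Why your proposed justifications fail.} The family of pieces is neither uniformly Lipschitz nor uniformly John, since the horn is an outward cusp. The configuration is not uniformly transversal, so the route via Theorem~\ref{thm-isoperimetric-ball} plus the complement-of-ball inequality does not apply piece by piece. The star-shapedness fix cannot work either. Whenever $\overline{B_r(0)}$ is an interior hole of $B_{2\rho}(z_i)$, the piece is not contractible, hence not star-shaped with respect to anything.

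\textbf{The repair the paper points to.} It avoids small pieces altogether and uses only the two global inequalities, as follows.
\begin{enumerate}
\item By the coarea formula, choose $s\in(r+\frac l3,r+\frac{2l}{3})$ with $\cH^{n-1}(E\cap\partial B_s(0))\le 3|E|/l$, and such that the standard slicing identities for the perimeter hold.
\item Apply the relative isoperimetric inequality in $\bR^n\setminus\overline{B_r(0)}$ to $E\cap B_s(0)$.
\item Apply Theorem~\ref{thm-isoperimetric-ball} in $B_{r+l}(0)$ to $E\setminus\overline{B_s(0)}$. Once $|B_l(0)|\ge 2m$, the minimum there is the volume of that piece itself.
\item Add the two inequalities and use subadditivity of $t\mapsto t^{\frac{n-1}{n}}$, as you already do. This gives
\[
P(E;A_{r,r+l}(0))+\frac{6|E|}{l}\gtrsim_n|E|^{\frac{n-1}{n}}.
\]
\item Since $\frac{|E|}{l}\le m^{\frac1n}l^{-1}|E|^{\frac{n-1}{n}}$, the cut term is absorbed once $l\ge w$ with $w\asymp_n m^{\frac1n}$, in line with \eqref{eq-choicew}.
\end{enumerate}

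If you prefer to keep your covering-and-summation scheme, choose pieces that cannot be nearly tangent to the inner sphere. For instance, when $r\ge l$ use annular sectors of angular width $\sim l/r$; these are uniformly bi-Lipschitz to a fixed cylinder. The case $r<l$ would then still need a separate argument.
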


Thus, a uniform relative isoperimetric inequality holds on every sufficiently wide annulus. In fact, the constant $w(m,n)$ can be chosen so that 
\begin{align}\label{eq-choicew}
w(m,n)\asymp_nm^{\frac{1}{n}},
\end{align}
which shows that a width of order $m^{\frac{1}{n}}$ is enough. The next lemma is a uniform version of \cite[Lemma 4.1]{BBG17}, stated with the full boundary as in \cite[Remark 4.2]{BBG17}. It is obtained by repeatedly applying Lemma \ref{lem-BBG17-2.2} to successively smaller annuli.
\begin{lem}\label{lem-BBG17-4.1}
Let $m,\lambda>0$ be given. Then there exists a constant $L=L(m,\lambda,n)>w$ such that for every $z\in \bR^n$, $r\geq 0$ and $l\geq L$, and every measurable set $E\subset A_{r,r+l}(z)$ with finite perimeter and $|E|\leq m$, at least one of the following alternatives holds:
\begin{enumerate}[(a)]
\item There exists a function $\phi\in H_0^1(A_{r,r+l}(z))$ with $\int_{\partial E}\phi^2d\sigma>0$ and
\[
\frac{\int_E|\nabla\phi|^2dx}{\int_{\partial E}\phi^2d\sigma}\leq \lambda.
\]
\item We have
\[
\left|E\cap A_{r+\frac{l-w}{2},r+\frac{l+w}{2}}(z)\right|=0,
\]
\end{enumerate}
where $w$ is the constant defined in Lemma \ref{lem-BBG17-2.2}.
\end{lem}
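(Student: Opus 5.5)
By translation invariance we may assume $z=0$. Write $A:=A_{r,r+l}(0)$. The plan is the iterative argument of \cite[Lemma 4.1]{BBG17}, with the constants tracked so that $L$ depends only on $m,\lambda,n$ and not on $r$. We suppose (a) fails and show that (b) holds.

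\textbf{Setting up the nested annuli.} Set $w=w(m,n)$ from Lemma \ref{lem-BBG17-2.2}. Let $A_0:=A$, and let $A_j$ be concentric annuli shrinking symmetrically toward the middle annulus $A_{r+\frac{l-w}{2},r+\frac{l+w}{2}}(0)$. Take the widths to be $l_j=w+(l-w)2^{-j}$ for $j=0,1,\dots,N$, so each $A_j$ has width at least $w$ and $A_N$ is close to the middle annulus. Define $m_j:=|E\cap A_j|\le m$.

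\textbf{The recursive step.} For each $j$ we test with a radial cutoff $\phi_j\in H_0^1(A)$. It equals $1$ on $A_{j+1}$, vanishes outside $A_j$, and has gradient of size about $\delta_j^{-1}$, where $\delta_j=(l_j-l_{j+1})/2$. Then
\[
\int_E|\nabla\phi_j|^2\le \delta_j^{-2}m_j,
\qquad
\int_{\partial E}\phi_j^2\,d\sigma\ge P(E;A_{j+1})\gtrsim_n m_{j+1}^{\frac{n-1}{n}},
\]
where the second bound uses Lemma \ref{lem-BBG17-2.2} on $A_{j+1}$. That lemma applies because $|E\cap A_{j+1}|\le m$ and the width is at least $w$. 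Here "$\partial E$" means the reduced boundary, which lies in $\overline{A}$, so boundary pieces are counted inside the annulus. If $m_{j+1}>0$ and (a) fails, we get
\[
m_{j+1}^{\frac{n-1}{n}}\lesssim_n \lambda^{-1}\delta_j^{-2}m_j .
\]
This bound is uniform in $r$, which is the reason for using Lemma \ref{lem-BBG17-2.2}.

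\textbf{Closing the argument.} Iterate the recursion starting from $m_0\le m$, with $\delta_j\asymp (l-w)2^{-j}$. With exponent $p=\frac{n}{n-1}>1$ we get
\[
m_{j+1}\le \bigl(C\lambda^{-1}4^{j}(l-w)^{-2}m_j\bigr)^{p}.
\]
This is superlinear, so a De Giorgi-type lemma gives $m_j\to0$ provided $m_0$ is small relative to $(l-w)^2\lambda$. That smallness holds once $l\ge L(m,\lambda,n)$ is large. Then $|E\cap\bigcap_j A_j|=0$. Since $\bigcap_j A_j$ is the closed middle annulus, (b) follows.

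\textbf{Main obstacle.} The delicate point is bookkeeping. The widths must stay $\ge w$ while the geometric decay in $\delta_j$ is still beaten by the superlinear exponent $p$. Concretely, we need $m_j\le \theta^{j}m_0$ with a fixed $\theta$ satisfying $\theta^{p-1}<4^{-p}$, proved by induction. Choosing $L$ large makes the base case hold. Each step uses the bound $|E|\le m$, and all constants are independent of $r$ and $z$. This independence is exactly the uniformity claimed beyond \cite{BBG17}.
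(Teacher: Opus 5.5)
Your proposal is correct and follows the route the paper indicates: you iterate the annular relative isoperimetric inequality of Lemma \ref{lem-BBG17-2.2} on nested annuli shrinking to the middle one and close with a De Giorgi-type iteration, which recovers the paper's quantitative choice $L-w\asymp_n\lambda^{-\frac{1}{2}}m^{\frac{1}{2n}}$ with uniformity in $r$, $z$ and $|E|\le m$. The only slip is in wording: the smallness you need is $\bigl(C\lambda^{-1}(l-w)^{-2}\bigr)^{p}m^{p-1}\le\theta$, i.e.\ $m\lesssim_n\bigl(\lambda(l-w)^{2}\bigr)^{n}$, rather than $m$ being small relative to $\lambda(l-w)^{2}$.
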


\begin{rmk}
In fact, $L$ can be chosen such that
\begin{align}\label{eq-choiceL}
L-w\asymp_n \lambda^{-\frac{1}{2}}m^{\frac{1}{2n}}.
\end{align}
With $\lambda>0$ fixed, the quantitative choices above imply that 
\[
L(m,\lambda,n)\to 0\qquad \text{as }m\to 0.
\]
\end{rmk}
\begin{rmk}
In \cite[Lemma 4.1]{BBG17}, the result is stated under the assumption $|E|=m$. However, its proof yields a uniform version for $|E|\leq m$. Indeed, the choice $w=w(m,n)$ in Lemma \ref{lem-BBG17-2.2} applies whenever $|E|\leq m$, while the proof of Lemma \ref{lem-BBG17-4.1} only requires 
\[
L-w\gtrsim_n \lambda^{-\frac{1}{2}}|E|^{\frac{1}{2n}}.
\]
Since $|E|\leq m$, the same choice $L=L(m,\lambda,n)$ works.
\end{rmk}

\section{Sharpness examples}\label{sec-3}

The two examples below address different aspects of sharpness. Thin cylinders determine the largest admissible exponent of the volume-diameter ratio, whereas necklace domains determine the optimal dependence on the eigenvalue index $k$.

\subsection{Thin cylinders and sharpness}
We first use a family of thin cylinders to show that the exponent $\frac{n}{n-1}$ of the volume-diameter ratio in \eqref{eq-main2} is optimal.

Fix $k\in \bN$ and let $\epsilon>0$ be sufficiently small. Define $\Omega_{\epsilon}=B_{\epsilon}^{n-1}\times (0,1)$, where $B_{\epsilon}^{n-1}\subset \bR^{n-1}$ is an $(n-1)$-dimensional ball of radius $\epsilon$. Then we have
\[
|\Omega_{\epsilon}|=\epsilon^{n-1}|B_1^{n-1}|,\qquad \text{and }\qquad D(\Omega_{\epsilon})=\sqrt{1+4\epsilon^2}.
\]
Furthermore, as $\epsilon \to 0^+$, we have
\[
\frac{\sigma_k(\Omega_{\epsilon})}{\epsilon}\to \frac{\pi^2k^2}{n-1}.
\]
The two-dimensional case is proved in \cite[Lemma 3.2]{HM22}, and the same argument applies in higher dimensions. Suppose that, for some $\alpha,\beta\in \bR$, the estimate
\[
D(\Omega)\sigma_k(\Omega)\leq C(n,\alpha,\beta)k^{\alpha}\left(\frac{|\Omega|^{\frac{1}{n}}}{D(\Omega)}\right)^{\beta}
\]
holds for every bounded Lipschitz domain and every $k\in \bN$. Applying it to $\Omega_{\epsilon}$, with $k$ fixed, and comparing the powers of $\epsilon$ as $\epsilon\to 0^+$, we obtain
\[
1\geq \frac{n-1}{n}\beta.
\]
Thus, the volume-diameter exponent $\frac{n}{n-1}$ in \eqref{eq-main2} is sharp.

\subsection{Necklace domains and sharpness}
We next use a family of necklace domains to establish the optimal $k$-dependence in our main estimates.

Fix $k\in \bN$ and let $\epsilon>0$ be sufficiently small. Define $\Omega_{\epsilon}$ as the union of $k$ balls of radius $1+\epsilon$ centered at the points $(0,\ldots,0,2i)$ for $i=1,\ldots,k$. As $\epsilon\to 0^+$, we have
\[
|\Omega_{\epsilon}|\to k|B_1|,\qquad \text{and }\qquad D(\Omega_{\epsilon})\to 2k.
\]
Moreover, we have
\[
\lim_{\epsilon\to 0^+}\sigma_k(\Omega_{\epsilon})=1.
\]
See, for example, \cite{GP10} and \cite[Theorem 3.2]{Hon21}. Hence, we may choose $\epsilon=\epsilon_k$ sufficiently small that
\[
D(\Omega_{\epsilon_k})\sigma_k(\Omega_{\epsilon_k})\geq k,\qquad \text{and }\qquad \frac{|\Omega_{\epsilon_k}|^{\frac{1}{n}}}{D(\Omega_{\epsilon_k})}\leq |B_1|^{\frac{1}{n}}k^{-\frac{n-1}{n}}.
\]
The first inequality shows that the exponent $1$ of $k$ in Theorem \ref{thm-main1} is sharp. Taken together, the two inequalities show that, with the volume-diameter exponent $\frac{n}{n-1}$ fixed, the exponent $2$ of $k$ in Theorem \ref{thm-main2} is also sharp.

\section{The level hyperplane method}\label{sec-4}
In this section, we develop two constructions of test functions adapted to the level hyperplanes orthogonal to a diameter of the domain. The first construction produces test functions localized near prescribed level hyperplanes and will be used to prove Theorem \ref{thm-main1}. The second uses functions depending only on the height variable; their Rayleigh-Steklov quotients are reduced to weighted one-dimensional quotients, leading to Theorem \ref{thm-main2}.

\subsection{The level hyperplanes}
Let $\Omega\subset \bR^n$ be a bounded Lipschitz domain with diameter $D=D(\Omega)$. Write points in $\bR^n$ as $x=(x_1,\ldots,x_n)$. After a rigid motion, we may assume that the diameter is realized by the pair
\[
p=(0,\ldots,0,0),\qquad q=(0,\ldots,0,D)
\]
in $\bar\Omega$. By the Pythagorean theorem and the definition of diameter, we have
\[
0<x_n<D,\qquad \forall x\in \Omega.
\]
For each $t\in \bR$, define
\begin{align}\label{eq-levelhyperplane}
H_t=\{x\in \bR^n:x_n=t\}.
\end{align}
This is the level set of the height function $x\mapsto x_n$, and we refer to it as the level hyperplane at height $t$. Since $p,q\in \bar\Omega$ and $\Omega$ is connected, we have $\Omega\cap H_t\neq \emptyset$ for every $t\in (0,D)$.

\subsection{Localized test functions}
First, combining translation within a prescribed level hyperplane with the annular decomposition of \cite{BBG17}, we develop the localized construction. The following lemma provides the required localized test functions.
\begin{lem}\label{lem-levelhyperplane}
For every $t\in (0,D)$ and every $\rho\in (0,D)$, there exists a point $z\in H_t$ and a function $\phi\in H^1(\bR^n)$ such that
\[
\supp \phi\subset \overline{B_{2\rho}(z)},\qquad \int_{\partial\Omega}\phi^2d\sigma>0.
\]
Moreover,
\[
\frac{\int_{\Omega}|\nabla \phi|^2dx}{\int_{\partial\Omega}\phi^2d\sigma}\lesssim_n \frac{1}{\rho}.
\]
\end{lem}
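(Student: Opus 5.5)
The plan is to fix the scale first, then choose $z$ by sliding a ball along $H_t$, splitting according to how much of $\Omega$ the ball captures. By the scaling rule for $\cR$ it suffices to treat $\rho=1$: replacing $\Omega$ by $\rho^{-1}\Omega$ maps $H_t$ to $H_{t/\rho}$, gives $D>1$, and multiplies Rayleigh quotients by $\rho$. Fix a small dimensional constant $\delta\in(0,\frac12)$, to be chosen later. Pick $x_0\in\Omega\cap H_t$, which is nonempty as noted above. For $y\in H_t$ set $g(y):=|\Omega\cap B_{1/2}(y)|/|B_{1/2}|$. This is continuous in $y$, satisfies $g(x_0)>0$, and vanishes once $y$ is far from the bounded set $\Omega$. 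The proof then splits into two cases.

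\emph{Case A: $g(y)\geq\delta$ for some $y\in H_t$.} Move $y$ inside $H_t$ along a ray to infinity. By the intermediate value theorem there is $z\in H_t$ with $g(z)=\delta$ exactly. Take the radial cutoff $\phi(x)=f(|x-z|)$ with $f=1$ on $[0,\frac12]$, $f$ linear on $[\frac12,1]$, and $f=0$ beyond $1$. Then $\supp\phi\subset\overline{B_1(z)}\subset\overline{B_2(z)}$ and $|\nabla\phi|\leq 2$, so $\int_\Omega|\nabla\phi|^2\lesssim_n 1$. For the boundary term, $\phi=1$ on $B_{1/2}(z)$, so Theorem \ref{thm-isoperimetric-ball} gives
\[
\int_{\partial\Omega}\phi^2d\sigma\geq P(\Omega;B_{1/2}(z))\gtrsim_n \bigl(\min\{\delta,1-\delta\}|B_{1/2}|\bigr)^{\frac{n-1}{n}}\gtrsim_n 1 .
\]
In particular the boundary integral is positive and the quotient is $\lesssim_n 1$.

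\emph{Case B: $g<\delta$ on all of $H_t$.} Take $z:=x_0$ and $E:=\Omega\cap B_{1/2}(z)$, viewed inside the annulus $A_{0,1/2}(z)$. Then $|E|\leq m:=\delta|B_{1/2}|$. Apply Lemma \ref{lem-BBG17-4.1} with this $m$ and a dimensional $\lambda$. By \eqref{eq-choicew} and \eqref{eq-choiceL} we have $w\asymp_n\delta^{1/n}$ and $L-w\asymp_n\lambda^{-1/2}\delta^{1/(2n)}$. So first choose $\delta$ small and then $\lambda$ large (both depending only on $n$) to ensure $L\leq\frac12$; the lemma applies with $l=\frac12$.

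Alternative (b) is impossible. Indeed, $\Omega$ is connected and contains $z$, and since $D>1$ it contains points at distance at least $D/2>\frac12$ from $z$. Hence $\Omega$ meets every sphere $\partial B_s(z)$ with $0<s<\frac12$. Since $\Omega$ is open, it meets the middle annulus of width $w$ in a set of positive measure. Therefore alternative (a) holds: there is $\phi\in H^1_0(B_{1/2}(z))$, extended by zero to an element of $H^1(\bR^n)$, with quotient at most $\lambda\lesssim_n1$. Because $\phi$ vanishes near $\partial B_{1/2}(z)$, the part of $\partial E$ where $\phi\neq0$ lies in $\partial\Omega$. Also $\int_E|\nabla\phi|^2=\int_\Omega|\nabla\phi|^2$. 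So this is exactly the required bound, with $\supp\phi\subset\overline{B_2(z)}$.

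I expect the main obstacle to be Case B, namely making the constants in Lemma \ref{lem-BBG17-4.1} fit inside a ball of fixed radius. This works only because the uniform version allows the mass bound $m$ to be an arbitrarily small multiple of $|B_1|$, which pushes $L$ below $\frac12$ by \eqref{eq-choiceL}. A second point needing care is the identification of $\partial E$ with $\partial\Omega$ on the support of $\phi$, together with the positivity of $\int_{\partial\Omega}\phi^2$ that alternative (a) provides. Undoing the scaling then turns the bound $\lesssim_n1$ into $\lesssim_n\rho^{-1}$ for general $\rho$.
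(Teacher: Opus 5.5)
Your proof is correct and follows essentially the paper's route: you split on the mass of $\Omega$ in a ball centered on $H_t$. In the small-mass case you apply Lemma \ref{lem-BBG17-4.1} and exclude alternative (b) by connectedness; in the large-mass case you slide the center within $H_t$ to a ball of exactly prescribed mass and combine the relative isoperimetric inequality with a radial cutoff. The differences are cosmetic: you rescale to $\rho=1$, use the punctured ball $A_{0,1/2}(z)$ instead of $A_{\rho/4,3\rho/4}(x_t)$, take a dimensional $\lambda$ instead of $\lambda=1/\rho$, and spell out why $\Omega$ meets the middle annulus, which the paper leaves implicit.
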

\begin{proof}
Fix $\lambda=\frac{1}{\rho}$ and let $m=\delta_n\rho^n$, where $\delta_n<\frac{|B_1(0)|}{2}$ is chosen sufficiently small so that the constant $L$ in Lemma \ref{lem-BBG17-4.1} satisfies $L\leq \frac{\rho}{2}$. 

Choose any $x_t\in \Omega\cap H_t$. If $|\Omega\cap B_{\rho}(x_t)|\leq m$, set
\[
E=\Omega\cap A_{\frac{\rho}{4},\frac{3\rho}{4}}(x_t).
\]
Then
\[
|E|\leq |\Omega\cap B_{\rho}(x_t)|\leq m.
\]
Since $\Omega$ is connected, we are in the alternative (a) of Lemma \ref{lem-BBG17-4.1} with $r=\frac{\rho}{4}$ and $l=\frac{\rho}{2}$. Thus, we obtain a function $\phi\in H_0^1(A_{\frac{\rho}{4},\frac{3\rho}{4}}(x_t))$ such that $\int_{\partial\Omega\cap A_{\frac{\rho}{4},\frac{3\rho}{4}}(x_t)}\phi^2d\sigma>0$ and
\[
\frac{\int_{\Omega\cap A_{\frac{\rho}{4},\frac{3\rho}{4}}(x_t)}|\nabla \phi|^2dx}{\int_{\partial\Omega \cap A_{\frac{\rho}{4},\frac{3\rho}{4}}(x_t)}\phi^2d\sigma}\leq \frac{1}{\rho}.
\]
Set $z=x_t$. Extending $\phi$ by zero outside the annulus completes this case.

If $|\Omega\cap B_{\rho}(x_t)|>m$, we may translate the center within $H_t$. By continuity, there exists $z\in H_t$ such that $|\Omega\cap B_{\rho}(z)|=m$. By the relative isoperimetric inequality in Theorem \ref{thm-isoperimetric-ball}, we have
\[
P(\Omega;B_{\rho}(z))\gtrsim_n m^{\frac{n-1}{n}}\gtrsim_n \rho^{n-1}.
\]
Define the function
\[
\phi(x)=\begin{cases}
1,&\text{in }\overline{B_{\rho}(z)},\\
2-\frac{|x-z|}{\rho},&\text{in }A_{\rho,2\rho}(z),\\
0,&\text{in }B_{2\rho}(z)^c.
\end{cases}
\]
Then $\phi\in H^1(\bR^n)$, and
\[
\int_{\Omega}|\nabla \phi|^2dx=\int_{\Omega\cap A_{\rho,2\rho}(z)}|\nabla \phi|^2dx\leq \frac{1}{\rho^2}|A_{\rho,2\rho}(z)|\lesssim_n\rho^{n-2}.
\]
Moreover,
\[
\int_{\partial\Omega}\phi^2d\sigma\geq P(\Omega;B_{\rho}(z))\gtrsim_n\rho^{n-1}.
\]
Combining these two estimates proves the lemma.
\end{proof}

\subsection{Height-dependent test functions}
Next, using functions depending only on the height variable, we develop the height-dependent construction.

Recalling the level hyperplanes $H_t$ defined in \eqref{eq-levelhyperplane}, we define the slice volume function by
\begin{align}\label{eq-levelarea}
a(t)=\cH^{n-1}(\Omega\cap H_t).
\end{align}
Since $\Omega\cap H_t$ is a nonempty relatively open subset of $H_t$ for every $t\in (0,D)$, we have
\[
a(t)>0,\qquad t\in (0,D).
\]
Moreover, Fubini's theorem gives
\[
\int_0^Da(t)dt=|\Omega|.
\] 
The following proposition reduces the Rayleigh-Steklov quotients of height-dependent test functions to weighted one-dimensional quotients.
\begin{prop}\label{prop-rayleighreduction}
Let $I\subset \bR$ be an interval, and let $\Omega\subset \bR^n$ be a bounded Lipschitz domain contained in $\bR^{n-1}\times I$. Let $f\in H^1(I)$, and define
\[
u(x):=f(x_n).
\]
Then we have
\[
\cR_{\Omega}(u)\lesssim_n\frac{\int_Ia(t)|f'(t)|^2dt}{\int_Ia(t)^{\frac{n-2}{n-1}}f(t)^2dt}.
\]
\end{prop}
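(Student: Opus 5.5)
Numerator and denominator are handled separately. Since $u(x)=f(x_n)$, we have $|\nabla u|=|f'(x_n)|$, and Fubini's theorem gives
\[
\int_\Omega|\nabla u|^2dx=\int_I a(t)|f'(t)|^2dt
\]
exactly. The substance of the proposition is therefore the lower bound
\[
\int_{\partial\Omega}f(x_n)^2d\sigma\gtrsim_n\int_I a(t)^{\frac{n-2}{n-1}}f(t)^2dt,
\]
which comes from the coarea formula on $\partial\Omega$ together with the isoperimetric inequality in each slice.

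\textbf{Step 1 (coarea on the boundary).} Apply the coarea formula on the Lipschitz $(n-1)$-rectifiable set $\partial\Omega$ to the $1$-Lipschitz function $x\mapsto x_n$. Its tangential Jacobian $J$ satisfies $0\le J\le 1$, so for nonnegative $g$,
\[
\int_{\partial\Omega}g(x_n)\,d\sigma\ \ge\ \int_{\partial\Omega}g(x_n)J\,d\sigma=\int_I g(t)\,\cH^{n-2}(\partial\Omega\cap H_t)\,dt .
\]
Taking $g=f^2$ reduces the claim to a slice-wise isoperimetric bound.

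\textbf{Step 2 (slice isoperimetry).} We want, for a.e. $t\in I$,
\[
\cH^{n-2}(\partial\Omega\cap H_t)\gtrsim_n a(t)^{\frac{n-2}{n-1}}.
\]
The plan is to identify the slice $\Omega_t=\Omega\cap H_t$, viewed as a subset of $\bR^{n-1}$, as a set of finite perimeter with $P_{\bR^{n-1}}(\Omega_t)\le\cH^{n-2}(\partial\Omega\cap H_t)$ for a.e. $t$. This follows from the slicing theory for sets of finite perimeter (e.g.\ Maggi's book, or \cite{EG15}): for a.e. $t$, the reduced boundary of the slice coincides $\cH^{n-2}$-a.e.\ with the slice of the reduced boundary. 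More simply, the topological boundary of $\Omega_t$ in $H_t$ is contained in $\partial\Omega\cap H_t$, and the perimeter of a bounded open set is at most $\cH^{n-2}$ of its topological boundary; this elementary bound suffices. Applying Theorem \ref{thm-isoperimetric} in dimension $n-1$ gives
\[
P(\Omega_t)\gtrsim_n a(t)^{\frac{n-2}{n-1}}.
\]
When $n=2$, the exponent is $0$ and the statement says only that the slice meets $\partial\Omega$ in at least one point whenever $a(t)>0$. This holds because $\Omega_t$ is a nonempty bounded open subset of a line, so it has at least two boundary points; for $t$ with $a(t)=0$ the integrand on the right should be read as $0$.

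\textbf{Step 3 (combine).} Integrating the slice bound against $f^2$ and dividing the exact numerator by it gives the stated inequality. I also need $\int_{\partial\Omega}u^2d\sigma>0$ for the Rayleigh quotient to make sense; this follows from the lower bound whenever the right-hand denominator is positive, which is the only case in which the claim has content.

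\textbf{Main obstacle.} The delicate part is Step 2: justifying, for almost every $t$, that the $(n-2)$-dimensional measure of $\partial\Omega\cap H_t$ controls the relative perimeter of the slice, given only that $\partial\Omega$ is Lipschitz. I would rely on the elementary inequality $P(U)\le\cH^{n-2}(\partial U)$ for bounded open $U\subset\bR^{n-1}$, which holds in general via Federer's criterion and $\partial^*U\subset\partial U$. I would also check the inclusion $\partial_{H_t}\Omega_t\subset\partial\Omega\cap H_t$, which is immediate from openness of $\Omega$.
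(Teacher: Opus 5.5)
Your proof is correct and follows the paper's argument: Fubini gives the numerator exactly, the coarea formula on $\partial\Omega$ (tangential Jacobian $|\nu_{\Omega}'|\le 1$) handles the boundary term, and the $(n-1)$-dimensional isoperimetric inequality is applied on each slice. The one difference is minor. The paper writes the coarea identity directly with $P_{H_t}(\Omega\cap H_t)$, which is the slicing theorem for sets of finite perimeter. You instead obtain $\cH^{n-2}(\partial\Omega\cap H_t)$ and pass to the perimeter via $\partial_{H_t}(\Omega\cap H_t)\subset\partial\Omega\cap H_t$ and Federer's bound $P(U)\le\cH^{n-2}(\partial U)$, which is equally valid. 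Your convention $a(t)^0=0$ where $a(t)=0$ for $n=2$ is a sensible point that the paper leaves implicit.
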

\begin{proof}
Since $u$ depends only on $x_n$, we have $\nabla u(x)=f'(x_n)e_n$. Thus, by the coarea formula,
\begin{align}\label{eq-onedimest1}
\int_{\Omega}|\nabla u|^2dx=\int_Ia(t)|f'(t)|^2dt.
\end{align}
On the other hand, the coarea formula on $\partial\Omega$ gives
\[
\int_{\partial\Omega}f(x_n)^2|\nu_{\Omega}'|d\sigma=\int_IP_{H_t}(\Omega\cap H_t)f(t)^2dt,
\]
where $\nu_{\Omega}'$ is the horizontal component of the outward unit normal $\nu_{\Omega}=(\nu_{\Omega}',\nu_{\Omega,n})$, and $P_{H_t}(\Omega\cap H_t)$ denotes the perimeter of $\Omega\cap H_t$ computed in $H_t\simeq \bR^{n-1}$. Applying the isoperimetric inequality from Theorem \ref{thm-isoperimetric} in $H_t$, we obtain
\[
P_{H_t}(\Omega\cap H_t)\gtrsim_na(t)^{\frac{n-2}{n-1}}.
\]
Combining these facts with $|\nu_{\Omega}'|\leq 1$, we have
\begin{align}\label{eq-onedimest2}
\int_{\partial\Omega}u^2d\sigma\geq \int_{\partial\Omega}f(x_n)^2|\nu_{\Omega}'|d\sigma=\int_IP_{H_t}(\Omega\cap H_t)f(t)^2dt\gtrsim_n\int_Ia(t)^{\frac{n-2}{n-1}}f(t)^2dt.
\end{align}
Dividing the identity \eqref{eq-onedimest1} by the lower bound in \eqref{eq-onedimest2} proves the proposition.
\end{proof}

The remaining difficulty is that the slice volume function $a$ may be highly irregular, so the test function $f$ must be chosen carefully. The following lemma addresses this issue by selecting a suitable dyadic level set of $a$ and deriving the required weighted estimate.
\begin{lem}\label{lem-1d-reduction}
Let $0\leq \gamma<1$. Then there exists a constant $C(\gamma)>0$ such that, for every positive function $a\in L^1(0,1)$ satisfying
\[
\int_0^1a(t)dt=1,
\]
there exists $f\in H_0^1(0,1)\setminus\{0\}$ such that
\[
\frac{\int_0^1a(t)|f'(t)|^2dt}{\int_0^1a(t)^{\gamma}f(t)^2dt}\leq C(\gamma).
\]
\end{lem}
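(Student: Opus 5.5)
The plan is to build $f$ out of dyadic level sets of $a$. The guiding principle is that $f'$ should live where $a$ is small, since that makes the numerator cheap, while $f$ should be of size one on a set where $a$ is not small, since that makes the denominator large. For $j\in\mathbb{Z}$ set
\[
A_j:=\{t\in(0,1):a(t)\le 2^{-j}\},\qquad E_j:=A_j\setminus A_{j+1}=\{2^{-j-1}<a\le 2^{-j}\}.
\]
Since $\int_0^1a=1$, Chebyshev gives $|\{a>2\}|<\tfrac12$, so $|A_{-1}|\ge\tfrac12$. Also $|A_j|\to0$ as $j\to\infty$, because $a>0$ almost everywhere.

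The basic test function is as follows. Fix $J\ge -1$ and a subinterval $[\alpha,\beta]\subset(0,1)$. Let $f$ increase with slope $1/|A_{J+1}\cap(0,\alpha)|$ only on $A_{J+1}\cap(0,\alpha)$, so that $f(0)=0$ and $f(\alpha)=1$. Let $f\equiv1$ on $[\alpha,\beta]$, and let $f$ decrease symmetrically on $A_{J+1}\cap(\beta,1)$ down to $f(1)=0$. This $f$ is Lipschitz, lies in $H_0^1(0,1)$ and takes values in $[0,1]$. On $A_{J+1}$ we have $a\le 2^{-J-1}$, so
\[
\int_0^1a|f'|^2\le 2^{-J-1}\Bigl(\frac{1}{|A_{J+1}\cap(0,\alpha)|}+\frac{1}{|A_{J+1}\cap(\beta,1)|}\Bigr).
\]
For the denominator, $a^\gamma\ge 2^{-(J+1)\gamma}$ on $E_J$, hence
\[
\int_0^1a^\gamma f^2\ge 2^{-(J+1)\gamma}|E_J\cap[\alpha,\beta]|.
\]
The quotient is therefore bounded by a multiple of
\[
2^{-(J+1)(1-\gamma)}\Big/\bigl(m_-\,|E_J\cap[\alpha,\beta]|\bigr),\qquad m_-:=\min\{|A_{J+1}\cap(0,\alpha)|,|A_{J+1}\cap(\beta,1)|\}.
\]
The proof then reduces to choosing $J,\alpha,\beta$ so that $m_-\,|E_J\cap[\alpha,\beta]|\gtrsim 2^{-(J+1)(1-\gamma)}$. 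The factor $1-\gamma>0$ is what should allow summing a geometric series.

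The next step is the pigeonhole selection of $J$. Suppose that for every $j\ge -1$ one had $|E_j|\le c\,2^{-(j+1)(1-\gamma)/2}$ with $c=c(\gamma)$ small. Summing over $j$ would give $|A_{-1}|=\sum_{j\ge-1}|E_j|<\tfrac12$, which is a contradiction. So some layer $E_J$ is heavy, that is, $|E_J|\gtrsim_\gamma 2^{-(J+1)(1-\gamma)/2}$. Among the heavy indices I would take the one that also makes $|A_{J+1}|$ comparable to $|E_J|$ up to such geometric factors. Both the heavy layer and the tail are controlled by the same convergent series $\sum 2^{-j(1-\gamma)/2}$, so the surplus exponent $(1-\gamma)/2$ absorbs the losses.

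The main obstacle is the positional step: choosing $\alpha,\beta$. Measure alone does not suffice, because $E_J$ and $A_{J+1}$ may be arranged on $(0,1)$ so that most of $A_{J+1}$ lies between the pieces of $E_J$. My first attempt is to take $\alpha,\beta$ as quantiles of the combined measure $\mu:=\mathcal{L}|_{A_{J+1}}+\mathcal{L}|_{E_J}$. If both end pieces $(0,\alpha)$ and $(\beta,1)$ capture a fixed fraction of $A_{J+1}$, the estimate closes. If not, then a fixed fraction of $A_{J+1}$ lies in a single interval of small $E_J$-mass. In that case I would swap the roles and restart the construction on that subinterval, with a lower layer ($J+1$ replaced by a deeper index) serving as the plateau region. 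The hope is that this recursion terminates thanks to the same geometric decay. Making the recursion quantitative, with constants depending only on $\gamma$, is where I expect the real work to lie.
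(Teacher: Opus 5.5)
Your heavy-layer pigeonhole is correct and is the same selection the paper makes. The closing step, however, fails as you have set it up, and the proposed recursion does not rescue it. The target $m_-\,|E_J\cap[\alpha,\beta]|\gtrsim 2^{-(J+1)(1-\gamma)}$ is unattainable in general.

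For $a\equiv 1$ one has $E_0=(0,1)$ and $E_J=\emptyset$ for $J\neq 0$, while $A_1=\emptyset$. So for every $J\ge -1$ either the plateau layer or the ramp region is empty, and the product is $0$.

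For $a=\tfrac32$ on $(0,\tfrac12)$ and $a=\tfrac12$ on $(\tfrac12,1)$, the only index with $E_J$ and $A_{J+1}$ both nonempty is $J=-1$. There $E_{-1}=(0,\tfrac12)$ lies entirely to the left of $A_0=(\tfrac12,1)$, so $m_-=0$ whenever $[\alpha,\beta]$ meets $E_{-1}$. Restarting on $(\tfrac12,1)$ with a deeper layer as plateau leaves no ramp region at all, since $A_2=\emptyset$.

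In particular, your claim that some heavy $J$ has $|A_{J+1}|$ comparable to $|E_J|$ is false. The obstruction comes from insisting that $f'$ live on $A_{J+1}$, strictly below the plateau layer. This produces two disjoint sets whose relative position you cannot control.

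The missing idea is that $f$ may vary on the heavy layer itself. Since $a\le 2^{-J}$ on $E_J$, this costs only a factor $2$ compared with $A_{J+1}$. The paper sets $s(t)=|E_J\cap(0,t)|$ and $f=T(s(t)/|E_J|)$ with $T(r)=2\min\{r,1-r\}$. Then $|f'|=\frac{2}{|E_J|}\chi_{E_J}$ a.e. The substitution $y=s(t)$, with $s'=\chi_{E_J}$, gives $\int_{E_J}f^2=|E_J|\int_0^1T^2=|E_J|/3$, regardless of how $E_J$ sits in $(0,1)$.

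Hence the numerator is at most $4\cdot 2^{-J}/|E_J|$ and the denominator at least $2^{-(J+1)\gamma}|E_J|/3$. The quotient is therefore $\lesssim 2^{-J(1-\gamma)}|E_J|^{-2}$. Your bound $|E_J|\gtrsim_\gamma 2^{-(J+1)(1-\gamma)/2}$ turns this into a constant depending only on $\gamma$.

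Equivalently, within your own scheme, ramp on $A_J\supseteq E_J$ instead of $A_{J+1}$. Take $\alpha,\beta$ with $|E_J\cap(0,\alpha)|=|E_J\cap(\beta,1)|=|E_J|/4$. Then $m_-\ge |E_J|/4$ and $|E_J\cap[\alpha,\beta]|=|E_J|/2$ automatically, and the positional problem disappears.
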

\begin{proof}
Define the level sets
\[
E_j:=\{t\in (0,1):2^{-j}<a(t)\leq 2^{-j+1}\},\qquad j=0,1,2,\ldots.
\]
Markov's inequality yields
\[
\sum_{j=0}^{\infty}|E_j|=|\{0<a(t)\leq 2\}|=1-|\{a(t)>2\}|\geq 1-\frac{1}{2}=\frac{1}{2}.
\]
Hence there exists an integer $j\geq 0$ such that
\[
|E_j|\geq \frac{1}{2}(1-2^{-\frac{1-\gamma}{2}})2^{-\frac{1-\gamma}{2}j}.
\]
Fix such an index $j$ and set $E:=E_j$. Let
\[
s(t):=\int_0^t\chi_E(r)dr
\]
be the cumulative distribution function of $E$, and let $T(r):=2\min\{r,1-r\}$. Define the function
\[
f(t)=T\left(\frac{s(t)}{|E|}\right).
\]
By the chain rule for Lipschitz compositions, we have $f\in H_0^1(0,1)\setminus\{0\}$, $f(0)=f(1)=0$ and $|f'(t)|=\frac{2}{|E|}\chi_E(t)$ almost everywhere. Thus we have
\[
\int_0^1a(t)|f'(t)|^2dt=\frac{4}{|E|^2}\int_Ea(t)dt\leq \frac{4}{|E|^2}2^{-j+1}|E|=\frac{2^{-j+3}}{|E|},
\]
where we used the definition of $E_j$. For the denominator, the one-dimensional area formula applied to the absolutely continuous, nondecreasing function $s$ gives the following identity:
\[\begin{aligned}
\int_Ef(t)^2dt=&\int_0^1T\left(\frac{s(t)}{|E|}\right)^2\chi_E(t)dt\\
=&\int_0^1T\left(\frac{s(t)}{|E|}\right)^2s'(t)dt\\
=&\int_0^{|E|}T\left(\frac{y}{|E|}\right)^2dy\\
=&|E|\int_0^1T(r)^2dr.
\end{aligned}\]
Therefore,
\[
\int_0^1a(t)^{\gamma}f(t)^2dt\geq 2^{-\gamma j}\int_Ef(t)^2dt= 2^{-\gamma j}|E|\int_0^1T(r)^2dr=2^{-\gamma j}\frac{|E|}{3}.
\]
Combining these two estimates, we conclude
\[
\frac{\int_0^1a(t)|f'(t)|^2dt}{\int_0^1a(t)^{\gamma}f(t)^2dt}\leq \frac{3\cdot 2^{-j+3}}{2^{-\gamma j}|E|^2}\leq \frac{3\cdot 2^{-j+5}}{(1-2^{-\frac{1-\gamma}{2}})^22^{-\gamma j}2^{-(1-\gamma)j}}=\frac{96}{(1-2^{-\frac{1-\gamma}{2}})^2}.
\]
\end{proof}

\begin{rmk}
The restriction $\gamma<1$ is essential: the conclusion of Lemma \ref{lem-1d-reduction} fails at the endpoint $\gamma=1$. For $M>0$, let
\[
a_M(t)=\frac{Me^{-Mt}}{1-e^{-M}},\qquad \text{so that}\qquad \int_0^1a_M(t)dt=1.
\]
For any $f\in H_0^1(0,1)\setminus\{0\}$, set
\[
g(t)=e^{-\frac{Mt}{2}}f(t).
\]
Then $g\in H_0^1(0,1)\setminus\{0\}$, and integration by parts gives
\[
\frac{\int_0^1a_M(t)|f'(t)|^2dt}{\int_0^1a_M(t)f(t)^2dt}=\frac{\int_0^1|g'(t)|^2dt}{\int_0^1g(t)^2dt}+\frac{M^2}{4}\geq \pi^2+\frac{M^2}{4}.
\]
Hence no constant independent of $a$ exists when $\gamma=1$.
\end{rmk}

Let $I=(\alpha,\alpha+h)$ be a bounded interval with $h>0$, and let $a\in L^1(I)$ be positive. Consider
\[
A(r)=\frac{h}{\int_Ia(t)dt}a(\alpha+hr),\qquad r\in (0,1).
\]
Fix $0\leq \gamma<1$. Since
\[
\int_0^1A(r)dr=1,
\]
Lemma \ref{lem-1d-reduction} yields a function $g\in H_0^1(0,1)\setminus\{0\}$ such that
\[
\frac{\int_0^1A(r)|g'(r)|^2dr}{\int_0^1A(r)^{\gamma}g(r)^2dr}\leq C(\gamma).
\]
Define
\[
f(t)=g\left(\frac{t-\alpha}{h}\right).
\]
A change of variables then yields the following corollary.
\begin{cor}\label{cor-1d-reduction}
Fix $0\leq \gamma<1$. Let $I\subset \bR$ be a bounded interval of positive length, and let $a\in L^1(I)$ be positive. Then there exists $f\in H_0^1(I)\setminus\{0\}$ such that
\[
\frac{\int_Ia(t)|f'(t)|^2dt}{\int_Ia(t)^{\gamma}f(t)^2dt}\leq C(\gamma)\left(\int_Ia(t)dt\right)^{1-\gamma}|I|^{-3+\gamma}.
\]
\end{cor}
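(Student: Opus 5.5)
The corollary follows from Lemma \ref{lem-1d-reduction} by an affine change of variables. The setup is the one the paper gives just before the statement. Write $I=(\alpha,\alpha+h)$, put $M:=\int_I a(t)\,dt$, and define $A(r)=\frac{h}{M}a(\alpha+hr)$ on $(0,1)$. Then $A$ is positive and $\int_0^1 A=1$, so Lemma \ref{lem-1d-reduction} gives $g\in H_0^1(0,1)\setminus\{0\}$ whose weighted quotient is at most $C(\gamma)$. We take $f(t)=g((t-\alpha)/h)$. This lies in $H_0^1(I)\setminus\{0\}$, because the affine map is a bi-Lipschitz bijection of $I$ onto $(0,1)$ that preserves the endpoints.

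Next I compute both integrals with the substitution $t=\alpha+hr$, so $dt=h\,dr$, $f'(t)=h^{-1}g'(r)$ and $a(t)=\frac{M}{h}A(r)$. The numerator becomes
\[
\int_I a|f'|^2\,dt=\frac{M}{h}\,h^{-2}\,h\int_0^1 A|g'|^2\,dr=Mh^{-2}\int_0^1A|g'|^2\,dr.
\]
The denominator becomes
\[
\int_I a^{\gamma}f^2\,dt=\left(\frac{M}{h}\right)^{\gamma}h\int_0^1 A^{\gamma}g^2\,dr=M^{\gamma}h^{1-\gamma}\int_0^1A^{\gamma}g^2\,dr.
\]

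Dividing the two, the quotient equals $M^{1-\gamma}h^{-3+\gamma}$ times the quotient for $g$. By the choice of $g$, this is at most $C(\gamma)M^{1-\gamma}|I|^{-3+\gamma}$, which is the claimed bound.

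There is no genuine obstacle here. The only points to check are that the exponents of $h$ combine correctly ($-2$ in the numerator against $1-\gamma$ in the denominator gives $-3+\gamma$) and that positivity and integrability of $a$ pass to $A$, so that Lemma \ref{lem-1d-reduction} applies.
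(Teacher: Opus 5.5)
Your proposal is correct and follows the paper's own argument: rescale $a$ to $A(r)=\frac{h}{\int_I a}\,a(\alpha+hr)$ on $(0,1)$, apply Lemma \ref{lem-1d-reduction}, and pull $g$ back by the affine change of variables. Your computation of the scaling factor $\bigl(\int_I a\bigr)^{1-\gamma}h^{-3+\gamma}$ is exactly the change of variables the paper leaves implicit.
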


Combining Corollary \ref{cor-1d-reduction} with Proposition \ref{prop-rayleighreduction}, we obtain height-dependent test functions whose Rayleigh-Steklov quotients are controlled in terms of the length of the interval and the volume of the corresponding slab.

\section{Proofs of the main theorems}\label{sec-5}
We now apply the two constructions developed in the preceding section. The localized construction yields Theorem \ref{thm-main1}, while the height-dependent construction yields Theorem \ref{thm-main2}.

\subsection{The diameter bound}
First, we prove Theorem \ref{thm-main1}. 
\begin{proof}
Let $D=D(\Omega)$, and adopt the coordinates and notation introduced in Section \ref{sec-4}. Fix
\[
\rho=\frac{D}{5(k+2)}.
\]
For each $i=1,\ldots,k+1$, set $t_i=\frac{iD}{k+2}$. Applying Lemma \ref{lem-levelhyperplane} at each height $t_i$, we obtain a point $z_i\in H_{t_i}$ and a function $\phi_i$ supported in $\overline{B_{2\rho}(z_i)}$. Moreover,
\[
|z_i-z_j|\geq |t_i-t_j|\geq \frac{D}{k+2}=5\rho>4\rho,\qquad \forall i\neq j.
\]
The supports of the functions $\phi_i$ are therefore pairwise disjoint, and Lemma \ref{lem-testfunction} gives
\[
\sigma_k(\Omega)\leq \max_{1\leq i\leq k+1}\cR_{\Omega}(\phi_i)\lesssim_n\frac{1}{\rho}=\frac{5(k+2)}{D}\lesssim_n \frac{k}{D}.
\]
Multiplying by $D$ completes the proof.
\end{proof}

\subsection{The volume-diameter bound}
Next, we prove Theorem \ref{thm-main2}. The following selection argument is inspired by the capacitary method of Grigor'yan, Netrusov, and Yau \cite[Section 4]{GNY04}. We partition the height range into $2k$ intervals and retain $k+1$ intervals whose associated slabs have small volume. On each retained interval, Corollary \ref{cor-1d-reduction}, applied with $\gamma=\frac{n-2}{n-1}$, provides a one-dimensional test function; we then extend these functions by zero to the full height interval and lift them to test functions on $\Omega$ via Proposition \ref{prop-rayleighreduction}.

\begin{proof}
Let $D=D(\Omega)$, and adopt the coordinates and notation introduced in Section \ref{sec-4}. Let $a(t)$ be the slice volume function defined in \eqref{eq-levelarea}.

Fix $k\in \bN$ and divide $(0,D)$ into $2k$ intervals $I_1,\ldots,I_{2k}$ of equal length $\frac{D}{2k}$. Define the corresponding slab volumes by
\[
a_i:=\int_{I_i}a(t)dt,\qquad i=1,\ldots,2k.
\]
At most $k-1$ intervals can satisfy
\[
a_i>\frac{|\Omega|}{k},
\]
since otherwise their total slab volume would exceed $|\Omega|$. Consequently, at least $k+1$ intervals satisfy 
\begin{align}\label{eq-choiceai}
a_i\leq\frac{|\Omega|}{k}.
\end{align}
Relabel $k+1$ of these intervals, together with their corresponding slab volumes, as $I_1,\ldots,I_{k+1}$ and $a_1,\ldots,a_{k+1}$. For each $i=1,\ldots,k+1$, Corollary \ref{cor-1d-reduction} for $\gamma=\frac{n-2}{n-1}$ provides a function $f_i\in H_0^1(I_i)\setminus\{0\}$ such that
\begin{align}\label{eq-estimateforonedim}
\frac{\int_{I_i}a(t)|f_i'(t)|^2dt}{\int_{I_i}a(t)^{\frac{n-2}{n-1}}f_i(t)^2dt}\lesssim_na_i^{\frac{1}{n-1}}|I_i|^{-2-\frac{1}{n-1}}.
\end{align}
Using \eqref{eq-choiceai} and the identity 
\[
|I_i|=\frac{D}{2k},
\]
we can bound the right-hand side, up to a dimensional constant, by
\[
\left(\frac{|\Omega|}{k}\right)^{\frac{1}{n-1}}\left(\frac{D}{2k}\right)^{-2-\frac{1}{n-1}}\asymp_nk^2|\Omega|^{\frac{1}{n-1}}D^{-2-\frac{1}{n-1}}.
\]
Extend each $f_i$ by zero to $(0,D)$, and define $u_i(x)=f_i(x_n)$. We thus obtain $k+1$ functions $u_1,\ldots,u_{k+1}$ in $H^1(\Omega)$ with pairwise essentially disjoint supports. By Proposition \ref{prop-rayleighreduction} and \eqref{eq-estimateforonedim}, we have
\[
\cR_{\Omega}(u_i)\lesssim_n k^2|\Omega|^{\frac{1}{n-1}}D^{-2-\frac{1}{n-1}},\qquad \forall i=1,\ldots,k+1.
\]
Applying Lemma \ref{lem-testfunction}, we conclude
\[
\sigma_k(\Omega)\lesssim_n k^2|\Omega|^{\frac{1}{n-1}}D^{-2-\frac{1}{n-1}},
\]
which is equivalent to the estimate in Theorem \ref{thm-main2}.
\end{proof}

\end{document}